\documentclass[a4paper]{amsart}
\setlength{\marginparwidth}{1.2in}
\usepackage{amssymb,amsfonts}
\usepackage[all,arc]{xy}
\usepackage{enumerate}
\usepackage{mathrsfs}
\usepackage{mathtools}
\usepackage{tikz}
\usepackage{pgfplots}
\usetikzlibrary{calc}
\usetikzlibrary{arrows,snakes,backgrounds}
\usepackage{color}   
\usepackage{marginnote}
\usepackage{tikz-cd}
\usetikzlibrary{positioning}

\usepackage{enumitem}
\usepackage{hyperref}
\usepackage{MnSymbol} 
\hypersetup{
    colorlinks=true, 
    linktoc=all,     
    citecolor=black,
    filecolor=black,
    linkcolor=blue,
    urlcolor=black
}
\usepackage{cleveref}
\usetikzlibrary{arrows}

\usepackage[normalem]{ulem}

\listfiles
\newtheorem{thm}{Theorem}[section]
\newtheorem{cor}[thm]{Corollary}
\newtheorem{prop}[thm]{Proposition}
\newtheorem{lem}[thm]{Lemma}

\newtheorem{quest}[thm]{Question}

\newtheorem{problem}[thm]{Problem}
\newtheorem*{openproblem*}{Problem}
\newtheorem*{quest*}{Question}
\newtheorem*{problem*}{Problem}

\theoremstyle{definition}
\newtheorem{defn}[thm]{Definition}

\theoremstyle{remark}
\newtheorem{rem}[thm]{Remark}

\newcommand{\bC}{\mathbb{C}}

\newcommand{\bQ}{\mathbb{Q}}
\newcommand{\bR}{\mathbb{R}}

\newcommand{\bZ}{\mathbb{Z}}

\newcommand\Diff{\mathrm{Diff}}
\newcommand\SDiff{\mathrm{SDiff}}
\newcommand\Homeo{\mathrm{Homeo}}
\newcommand\BDiff{\mathrm{BDiff}}
\newcommand\BHomeo{\mathrm{BHomeo}}

\newcommand{\hcoker}{/\!\!/}

\newcommand{\SO}{\mathrm{SO}}
\newcommand{\fix}{\mathrm{Fix}}
\newcommand{\GL}{\mathrm{GL}}

\renewcommand{\paragraph}[1]{\medskip \noindent {\bf #1.}}

\newcounter{notes}


\addtocontents{toc}{\protect\setcounter{tocdepth}{1}}
\makeatletter
\let\c@equation\c@thm
\makeatother
\numberwithin{equation}{section}
\bibliographystyle{plain}
\title[Dynamical and cohomological obstructions]{Dynamical and cohomological obstructions to extending group actions}

\author{Kathryn Mann}
\address{Department of Mathematics\\
Cornell University\\
Ithaca, NY, 14850 }
\email{k.mann@cornell.edu}
\author{Sam Nariman}
\address{Department of Mathematics\\
  University of Copenhagen\\
Universitetsparken 5\\
2100 Copenhagen, Denmark
}
\email{sam@math.ku.dk}

\begin{document}

\begin{abstract}
Motivated by a question of Ghys, we study obstructions to extending group actions on the boundary $\partial M$ of a $3$-manifold to a $C^0$-action on $M$.   
Among other results, we show that for a $3$-manifold $M$, the $S^1 \times S^1$ action on the boundary does not extend to a $C^0$-action of $S^1 \times S^1$ as a discrete group on $M$, except in the trivial case $M \cong D^2 \times S^1$.  
\end{abstract}
\maketitle

\section{Introduction}
This paper concerns the structure of diffeomorphism and homeomorphism groups of manifolds.  Our motivation is the following seemingly simple question of Ghys.  

\begin{quest}[\cite{Ghys}]
\textit{If M is a manifold with boundary $\partial M$, under what conditions is there a homomorphism $\Diff_0(\partial M) \to \Diff_0(M)$ that ``extends  $C^{\infty}$-diffeomorphisms to the interior"?}
\end{quest} 
Here and in what follows, $\Diff(M)$ denotes the group of self-diffeomorphisms of $M$, and $\Diff_0(M)$ its identity component.  
Put otherwise, Ghys' question asks for obstructions to a group-theoretic section of the natural ``restriction to boundary'' map $r\colon \Diff_0(M) \to \Diff_0(\partial M)$.   Restricting the domain of the map to $\Diff_0(M)$ ensures that the boundary map is surjective onto $\Diff_0(\partial M)$, thus any such obstruction will necessarily be group-theoretic in nature.  
Ghys' original work treats the case where $M$ is an $n$-dimensional ball (for general $n$), a case explored further in \cite{Mann15}.  This program is also reminiscent of Browder's notion of {\em bordism} of diffeomorphisms, as discussed at the end of this section.

We are interested in a more general problem of obstructions for {\em extending group actions}. Given a discrete group $\Gamma$ and a homomorphism $\rho\colon \Gamma \to \Diff_0(\partial M)$, an \emph{extension of $\rho$ to $M$} is a homomorphism $\phi$ such that the following diagram commutes.
\begin{displaymath}
    \xymatrix @M=3pt {
          & \Diff_0(M) \ar[d]^{r} \\
        \Gamma \ar[r]_{\hspace{-.7cm}\rho} \ar[ur]^\phi  & \Diff_0(\partial M) }
\end{displaymath}
This is already a challenging and interesting problem when $M$ is a 3-manifold with sphere or torus boundary.  Our focus here is cohomological obstructions to extension: we interpret Ghys' question as an {\em invitation to understand the relationship between the group cohomology of $\Diff_0(M)$ and $\Diff_0(\partial M)$. }
 
When $\partial M$ is diffeomorphic to the torus, the cohomology of the classifying space of $\Diff_0(\partial M)$ with its $C^{\infty}$-topology is known (see \cite{earle1969fibre}) to be  $$H^*(\BDiff_0(\partial M);\bQ)\cong H^*(\BHomeo_0(\partial M);\bQ)\cong \bQ[x_1,x_2].$$ Therefore, there are two potential obstruction classes $\rho^*(x_1), \rho^*(x_2)\in H^2(\mathrm{B}\Gamma;\bQ)$ where $\mathrm{B}\Gamma$ is the classifying space of the group $\Gamma$.   Like Ghys, we assume the group homomorphisms are between groups equipped with the discrete topology.   

We prove the following theorem on the restriction map, which allows us to find obstruction classes for the section problem over various subgroups.  
\begin{thm}\label{main}
Let $M$ be an orientable three-manifold which is not diffeomorphic to $D^2 \times S^1$, and with $\partial M$ diffeomorphic to  $T^2$. The map
\[H^2(\BDiff_0(T);\bQ)\to H^2(\BDiff_0(M);\bQ),\]
which is induced by the restriction map $\Diff_0(M)\to \Diff_0(T)$, has a nontrivial kernel.
The same holds when $\Diff_0$ is replaced by $\Homeo_0$.
\end{thm}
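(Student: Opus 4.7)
The plan is to reduce the cohomological statement to a claim about $\pi_1$ of diffeomorphism groups via Hurewicz, and then apply a trace-in-the-center argument combined with the classical computation of centers of $3$-manifold groups. Concretely: since $\Diff_0(M)$ is path-connected, $\BDiff_0(M)$ is simply connected; combined with Earle-Eells's equivalence $\Diff_0(T)\simeq T^2$, the induced map
\[H^2(\BDiff_0(T);\bQ) = \bQ^2 \;\longrightarrow\; H^2(\BDiff_0(M);\bQ)\]
is dual to $\pi_1(\Diff_0(M))\otimes\bQ \to \pi_1(T^2)\otimes\bQ = \bQ^2$. It therefore suffices to show that this $\pi_1$-map has image of rank at most $1$ whenever $M \neq D^2 \times S^1$.

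For the rank bound I would use the following observation. Given a loop $\tilde\phi\colon S^1 \to \Diff_0(M)$ based at the identity and a basepoint $p \in \partial M$, the trace $\gamma(t):=\tilde\phi_t(p)$ is a loop in $\partial M$ at $p$. For any $[\alpha]\in\pi_1(M,p)$, the family
\[t\;\mapsto\;\gamma|_{[0,t]} \cdot \tilde\phi_t(\alpha) \cdot \gamma|_{[0,t]}^{-1}\]
is a continuous homotopy of loops at $p$ from $\alpha$ to $\gamma\alpha\gamma^{-1}$ (using $\tilde\phi_1 = \mathrm{id}$), so $\gamma$ centralizes $\alpha$; hence $[\gamma]\in Z(\pi_1(M,p))$. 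Under the evaluation equivalence $\pi_1(\Diff_0(T))\cong\pi_1(T,p)$, the class $[\gamma]_T$ is the image of $[\tilde\phi]$ under restriction, so the image of $\pi_1(\Diff_0(M)) \to \pi_1(T^2)$ lies in the preimage of $Z(\pi_1(M))$ under the peripheral map $\pi_1(T) \to \pi_1(M)$.

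It then remains to show this preimage has $\bQ$-rank at most $1$. In the reducible case, prime decomposition writes $\pi_1(M)$ as a nontrivial free product (the Poincar\'e conjecture eliminates simply connected summands), which has trivial center; the preimage is then the kernel of the peripheral map, with rank $\leq 1$ by half-lives-half-dies. When $M$ is irreducible and $M \neq D^2 \times S^1$, the boundary torus is incompressible, so $\pi_1(T) \cong \bZ^2$ injects into $\pi_1(M)$; by the Seifert fibered space theorem of Gordon-Heil and Scott, the center of $\pi_1(M)$ has rank at most $1$ unless $M \cong T^2\times I$, which is excluded by the hypothesis that $\partial M$ has a single component. Thus $Z(\pi_1(M))$ has rank $\leq 1$, and its intersection with the peripheral $\bZ^2$ has rank $\leq 1$.

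The identical argument handles the $\Homeo_0$ case: as noted in the introduction, $\Diff_0(T^2)$ and $\Homeo_0(T^2)$ have classifying spaces with the same rational cohomology, and the trace-in-center argument is purely continuous. The principal obstacle I anticipate is the $3$-manifold-theoretic input on centers in the third paragraph, but this is a standard consequence of the Seifert fibered space theorem.
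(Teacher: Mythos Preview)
Your argument is correct and takes a genuinely different, and considerably more elementary, route than the paper's.

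The key novelty is your uniform use of the trace-in-center (Gottlieb) observation: once you know that the image of $\pi_1(\Diff_0(M)) \to \pi_1(T)$ lands in $i^{-1}\bigl(Z(\pi_1(M))\bigr)$ for the peripheral map $i\colon \pi_1(T)\to\pi_1(M)$, bounding the rank of this subgroup of $\bZ^2$ by $1$ becomes pure classical $3$-manifold topology. Your two cases go through as stated: for reducible $M$, the Poincar\'e conjecture and the fact that the factor with torus boundary has infinite $\pi_1$ (half-lives-half-dies) force $\pi_1(M)$ to be a nontrivial free product with trivial center, and then the kernel of $i$ has rank $\le 1$ again by half-lives-half-dies; for irreducible $M\ne D^2\times S^1$, the loop theorem gives incompressibility, Waldhausen's theorem gives the Seifert-fibered structure when the center is nontrivial, and the standard description of centers of Seifert-fibered groups (together with the exclusion of $T^2\times I$, which has two boundary components) gives the rank bound.

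By contrast, the paper treats the two cases very differently. For irreducible $M$ (Proposition~\ref{prop:irreducible}) it invokes Hatcher's computation $\Diff_0(P)\simeq S^1$ when the center is nontrivial, and a Dehn-twist argument in $\pi_0(\Diff(P,\partial))$ when it is trivial. For reducible $M$ it builds substantial machinery: the sphere complex $X_\bullet(M)$, the auxiliary group $\SDiff(M)$ of slide diffeomorphisms, and a spectral-sequence analysis (Lemmas~\ref{firstrow} and~\ref{0th row}) to reduce to stabilizers of sphere systems.

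What each approach buys: yours is dramatically shorter and never touches the homotopy type of $\Diff_0(M)$ beyond Earle--Eells for the torus. The paper's semi-simplicial resolution of $\mathrm{B}\SDiff(M)$ is of independent interest for reducible $3$-manifolds, and its argument does not rely on the Poincar\'e conjecture: the case where the prime factor containing the torus is $D^2\times S^1$ is handled in Lemma~\ref{prop:20} by a direct framed-embedding computation, whereas your free-product argument would break down for $(D^2\times S^1)\#\Sigma$ with $\Sigma$ a fake $3$-sphere. Since Poincar\'e is now a theorem, this is a distinction without practical consequence.
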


If $M$ is irreducible (and therefore Haken) this is a consequence of Waldhausen's theorem (\cite{MR0236930}) on Seifert fibered manifolds and the main theorem of Hatcher \cite{MR0420620}, see Proposition \ref{prop:irreducible} below.  The difficulty in the reducible case lies in understanding the relationship between the topology of $\BDiff_0(M)$ and that of the classifying spaces of the diffeomorphism groups of the prime factors of $M$.  C\' esar de S\' a and Rourke (\cite{MR513752}) made a proposal to describe the homotopy type of $\Diff(M)$ in terms of the homotopy type of diffeomorphisms of the prime factors and an extra factor of the loop space on ``the space of prime decompositions". Hendriks-Laudenbach (\cite{MR780734}) and  Hendriks-McCullough (\cite{MR893801}) found a  precise model for this extra factor. Later Hatcher (\cite{HatcherUnfinished}) gave a finite dimensional model for this ``space of prime decompositions" and more interestingly, he proposed that there should be a ``wrong-way map" between $\BDiff(M)$ and the classifying space of diffeomorphisms of prime factors. Unfortunately, his approach was never completed.  \Cref{main} is inspired by his wrong-way map. But our proof of Theorem \ref{main} avoids some of the technical difficulties of Hatcher's proposed approach by combining $3$-manifold techniques with certain semi-simplicial techniques used in the parametrized surgery theory in the work of Galatius and  Randal-Williams (\cite{galatius2014homological}).  

The case $M \cong D^2 \times S^1$ is exceptional: in this case it is shown in \cite[Theorem 5.1]{gabai2001smale} that it is a consequence of Hatcher's theorem (\cite{hatcher1983proof}) that the restrict-to-boundary map $\Diff_0(M)\to \Diff_0(\partial M)$ is a weak equivalence. Therefore, the induced map $H^2(\BDiff_0(T);\bQ)\to H^2(\BDiff_0(M);\bQ)$ is an isomorphism.  
However, we can treat this case by giving a completely independent, dynamical rather than cohomological argument.  While it uses $C^1$ differentiability in an essential way, the strategy is general enough to apply both to $D^2 \times S^1$ and to any reducible 3-manifold with torus boundary.   This is carried out in section \ref{sec:dynamical}. 
Combining these results, we obtain the following answer to Ghys' problem.  

\begin{thm} \label{thm:torus_general} (Extending actions on the torus).
\begin{itemize}
\item Suppose $\rho: \Gamma \to \Diff_0(T^2)$ is an action that extends to a $C^0$ action on an orientable $3$-manifold $M$ with $\partial M\cong T^2$.  If the obstruction classes $\rho^*(x_1)$ and $\rho^*(x_2)$ are linearly independent in cohomology with rational coefficients, then $M \cong D^2 \times S^1$.  
\item There is a (explicitly given) finitely generated group $\Gamma$ such that, for any manifold with $\partial M \cong T^2$, there is an action $\Gamma \to \Diff_0(\partial M)$ that does not extend to the group of $C^1$-diffeomorphisms $\Diff^1_0(M)$.  
\end{itemize}  
\end{thm}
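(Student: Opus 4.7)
For the first bullet, the plan is a direct functoriality argument from \Cref{main}. If an extension $\phi : \Gamma \to \Homeo_0(M)$ of $\rho$ exists, then on classifying spaces the composite $\mathrm{B}\Gamma \xrightarrow{\mathrm{B}\phi} \BHomeo_0(M) \xrightarrow{\mathrm{B}r} \BHomeo_0(T^2)$ agrees with $\mathrm{B}\rho$ after invoking the rational cohomology isomorphism $H^*(\BHomeo_0(T^2);\bQ) \cong H^*(\BDiff_0(T^2);\bQ) = \bQ[x_1,x_2]$ recalled in the introduction. The $\Homeo_0$-version of \Cref{main} produces, for every $M \not\cong D^2 \times S^1$, a nonzero class $\alpha = a x_1 + b x_2$ in the degree-two kernel of $(\mathrm{B}r)^*$; pulling $\alpha$ back through $\mathrm{B}\phi$ forces the nontrivial linear dependence $a\,\rho^*(x_1) + b\,\rho^*(x_2) = 0$ in $H^2(\mathrm{B}\Gamma;\bQ)$, contradicting the linear independence hypothesis. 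Hence $M \cong D^2 \times S^1$.

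For the second bullet I would take $\Gamma = A \ast H$, a free product of two explicit finitely generated groups, each carrying one of the two obstructions. The factor $A$ would be $\pi_1(\Sigma_g) \times \pi_1(\Sigma_h)$ for some $g,h \geq 2$, acting on $T^2 = S^1 \times S^1$ by the product of two Fuchsian representations into $\Diff_0(S^1)=\mathrm{Diff}^+(S^1)$; the two Euler classes $\rho_A^*(x_1), \rho_A^*(x_2)$ then lie in distinct K\"unneth summands of $H^2(\mathrm{B}A;\bQ)$ and are nonzero, hence linearly independent, and this independence is inherited by $H^2(\mathrm{B}\Gamma;\bQ) = H^2(\mathrm{B}A;\bQ) \oplus H^2(\mathrm{B}H;\bQ)$. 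The factor $H$ is the dynamically rigid group provided by \S\ref{sec:dynamical}: a finitely generated group equipped with an action $\rho_H$ on $T^2$ that admits no lift to $\Diff^1_0(D^2 \times S^1)$. Given an arbitrary $M$ with $\partial M \cong T^2$ I would then set $\rho_M|_A = \rho_A$, $\rho_M|_H$ trivial when $M \not\cong D^2 \times S^1$, and swap these roles when $M \cong D^2 \times S^1$, so that either the first bullet or the dynamical obstruction rules out extension.

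The main obstacle is the construction of $H$, which must be handled purely dynamically: by the Gabai--Hatcher result cited in the excerpt, $\Diff^1_0(D^2 \times S^1) \to \Diff^1_0(T^2)$ is a weak equivalence, so no cohomological argument is available in the $D^2 \times S^1$ case. The strategy, to be executed in \S\ref{sec:dynamical}, is to choose relations in $H$ so that any hypothetical $C^1$-extension is forced to admit a common fixed point in the interior (for instance via degree arguments on invariant subsets of the boundary together with Lefschetz-type statements), and then to apply a Thurston-stability--type analysis of the derivative representation of $H$ at such a point, exploiting algebraic features of $H$ such as perfectness or the absence of faithful finite-dimensional linear representations, to derive a contradiction. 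The appeal to $C^1$ regularity is essential here, since the Alexander trick supplies a (non-group-theoretic) coning extension of any homeomorphism of the boundary.
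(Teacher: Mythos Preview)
Your argument for the first bullet is correct and is exactly how the paper derives it from \Cref{main}.

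For the second bullet, your overall architecture (a cohomological obstruction for $M \not\cong D^2\times S^1$ combined with a dynamical obstruction for $D^2\times S^1$) matches the paper's, but the execution differs in two significant respects.

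First, the paper produces a \emph{single} action that fails to extend for every $M$ simultaneously, whereas you let $\rho_M$ vary with $M$. The statement as literally written permits this, but the paper proves (and clearly intends) the stronger uniform version; and once the action is allowed to depend on $M$, the free product $A\ast H$ is superfluous, since any group surjecting onto each of $A$ and $H$ would already do. The paper's economy is to take $\Gamma'=\Gamma\times\Gamma$, where $\Gamma\subset\Diff_0(S^1)$ is a single two-generator group that \emph{both} carries torsion (an order-$6$ commutator) for the dynamical argument \emph{and} receives a homomorphism from a surface group $\Gamma_6$ making the Euler class nonzero. The product action of $\Gamma'$ on $S^1\times S^1$ then has linearly independent pulled-back Euler classes and simultaneously contains the dynamical obstruction in each factor.

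Second, and more seriously, your sketch of the dynamical mechanism is not the one that works here. The paper does not use Thurston stability, perfectness, or the nonexistence of faithful linear representations. Instead $\Gamma=\langle f,g\rangle$ is built so that $r=[f,g]$ has order $6$, and one shows that any $C^1$ extension $\phi$ forces $\phi(r)^2$ to have a one-dimensional fixed set: for reducible $M$ this uses the equivariant sphere theorem to produce an invariant separating sphere (on which the order-$3$ map fixes two points), and for $D^2\times S^1$ one lifts to $\bR^3$ and averages a metric. At a point of $\fix(\phi(r)^2)$ the derivative $D\phi(r)^2$ is conjugate to $\left(\begin{smallmatrix}A&0\\0&1\end{smallmatrix}\right)$ with $A\in\SO(2)$ of order $3$; the centralizer of this matrix in $\GL(3,\bR)$ is abelian, so $D\phi([f,g])=I$ at that point, contradicting $D\phi(r)^2\neq I$. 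Your proposed route---forcing a \emph{common} fixed point of the entire group on the solid torus via ``Lefschetz-type'' arguments (note $\chi(D^2\times S^1)=0$) and then invoking Thurston stability (which additionally needs all derivatives to be trivial at that point)---has no evident implementation. The torsion-and-centralizer trick, borrowed from Ghys, is what actually makes the $C^1$ argument go through.
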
 
As a concrete, simple example and special case of the first item in the above result, for a $3$-manifold $M$ with $\partial M\cong T^2$, not homeomorphic to $D^2 \times S^1$,  the $S^1 \times S^1$ action on the boundary does not lift to a $C^0$-action on $M$ (even discontinuously!).   

We also treat the case of manifolds with sphere boundary, again using a dynamical argument.
\begin{thm} \label{thm:sphere}
Let $M$ be an orientable 3-manifold with $\partial M \cong S^2$.  Then there is no extension $\Diff_0(\partial M) \to \Diff^1_0(M)$.  
\end{thm}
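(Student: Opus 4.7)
The approach is dynamical and parallels the torus case: combine distortion in finitely generated subgroups with $C^1$-rigidity at fixed points. Suppose, for contradiction, that a homomorphism $\phi\colon\Diff_0(S^2)\to\Diff^1_0(M)$ extends the identity on the boundary. The plan is to produce $f_0\in\Diff_0(S^2)$ and a finitely generated subgroup $\Gamma\leq\Diff_0(S^2)$ containing $f_0$ so that: (i) $f_0$ has a fixed point $p\in S^2$ with $D_pf_0\in GL(T_pS^2)$ admitting an eigenvalue $\lambda$ of modulus strictly greater than $1$; and (ii) $f_0$ is a distortion element in $\Gamma$, meaning $\|f_0^n\|_{\Gamma}=o(n)$, where $\|\cdot\|_\Gamma$ denotes word length.

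For (i), take a $C^\infty$-small perturbation of the identity supported in a small disk $D\subset S^2$ that creates a Morse--Smale hyperbolic fixed point $p$ with an expanding eigenvalue $\lambda>1$; such $f_0$ lies in $\Diff_0(S^2)$. For (ii), apply the distortion construction of Calegari--Freedman, which places any compactly supported element of $\Diff^\infty_0(S^2)$ inside a finitely generated subgroup where it is arbitrarily distorted, by choosing a ``shift'' diffeomorphism $T\in\Diff_0(S^2)$ with iterates $T^iD$ pairwise disjoint and grouping conjugates $T^if_0T^{-i}$ into large commuting blocks.

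To derive the contradiction, write $\Gamma=\langle g_1,\dots,g_k\rangle$ and let $L=\max_i L(\phi(g_i)^{\pm1})$ be the maximum Lipschitz constant of the image generators; this is finite since $M$ is compact and each $\phi(g_i)$ is $C^1$. Submultiplicativity of the Lipschitz norm gives $L(\phi(\gamma))\leq L^{\|\gamma\|_\Gamma}$ for every $\gamma\in\Gamma$, and combined with (ii) this yields $\log L(\phi(f_0^n))=o(n)$. On the other hand, $\phi(f_0)|_{\partial M}=f_0$, so $\phi(f_0)$ fixes $p\in\partial M$, and $D_p\phi(f_0)\in GL(T_pM)$ preserves $T_p\partial M$ with restriction equal to $D_pf_0$. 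Thus $\lambda$ is an eigenvalue of $D_p\phi(f_0)$, whence $L(\phi(f_0^n))\geq\|(D_p\phi(f_0))^n\|\geq\lambda^n$, contradicting the distortion bound.

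The main technical obstacle is arranging the Calegari--Freedman distortion construction compatibly with the prescribed hyperbolic dynamics of $f_0$: one must verify that the shift $T$ and auxiliary generators can be chosen so that the commuting-block identity used to shorten powers of $f_0$ is not disrupted by the nontrivial derivative at $p$. Because the argument is essentially local around the support of $f_0$ and one has freedom to pick $T$ after $f_0$, this is a configuration question rather than a conceptual issue; the essential mechanism---that distortion in $\Diff^1$ forbids hyperbolic fixed points---is robust and runs parallel to the torus argument in \Cref{thm:torus_general}.
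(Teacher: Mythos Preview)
Your approach has a genuine and fatal gap: the two ingredients (i) and (ii) are mutually incompatible.  The very Lipschitz argument you invoke on $M$ applies verbatim on $S^2$.  If $f_0\in\Diff^\infty_0(S^2)$ has a hyperbolic fixed point $p$ with expanding eigenvalue $\lambda>1$, and $\Gamma=\langle g_1,\dots,g_k\rangle\subset\Diff^\infty_0(S^2)$ contains $f_0$, then setting $L=\max_i L(g_i^{\pm1})$ (a finite quantity since $S^2$ is compact and the $g_i$ are $C^1$) gives
\[
\lambda^n \;\leq\; \|(D_pf_0)^n\| \;\leq\; L(f_0^n) \;\leq\; L^{\|f_0^n\|_\Gamma},
\]
so $\|f_0^n\|_\Gamma\geq n\log\lambda/\log L$.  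Thus $f_0$ is \emph{undistorted} in every finitely generated subgroup of $\Diff^1_0(S^2)$, and in particular in every finitely generated subgroup of $\Diff^\infty_0(S^2)$.  Item (ii) is therefore impossible to arrange, and what you call ``a configuration question rather than a conceptual issue'' is in fact the entire obstruction.

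You have also misattributed the Calegari--Freedman result: their construction places elements of $\Homeo_0(S^n)$, not $\Diff^\infty_0(S^n)$, into finitely generated subgroups as distortion elements.  The smooth distortion results (Avila, Militon) require the element to be recurrent in the $C^\infty$ topology, which a diffeomorphism with a hyperbolic fixed point never is.  The paper's proof avoids this trap entirely by working with \emph{finite order} rotations $f,g\in\SO(2)\subset\Diff_0(S^2)$, using Ghys' lemma that the centralizers $G_f,G_g$ in $\Diff_c(S^2\setminus\{n,s\})$ generate, together with simplicity of these centralizers, to force $\phi$ of a certain finite order element to have trivial derivative at an interior fixed point.
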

Note that if $M \cong B^3$ any group action on $S^2$ can be coned off to an action by homeomorphisms on the ball, thus the necessity of the differentiability hypothesis. Moreover, in this case, again by Hatcher's theorem (\cite{hatcher1983proof})  the restrict-to-boundary map $\Diff_0(M)\to \Diff_0(\partial M)$ is a weak equivalence. Therefore, in this case, there is no cohomological obstruction either. 

In the case where the extension is assumed continuous (giving the possibility of {\em topological} rather than purely algebraic obstructions), recent work of \cite{ChenMann} gives a negative answer to Ghys' original question in the smooth case, and in many settings in the $C^0$ case.  In some cases, continuity of group actions is known to be automatic \cite{Hurtado, Mann_automatic}, but even this is not enough to recover Theorems \ref{thm:torus_general} and \ref{thm:sphere} above.  
However, continuity motivates us to look for cohomological obstructions in the continuous setting.  In Section \ref{sec:p_1} we discuss this problem briefly for Pontyagin classes for manifolds with sphere boundary.

\paragraph{Further questions} 
Ghys' question can be related to the following notion of bordism of group actions. 

\begin{defn} \label{bordism def}
Let $N_1$ and $N_2$ be oriented $n$-manifolds, $\Gamma$ a discrete group, and $\rho_i: \Gamma \to \Diff(N_i)$ a homomorphism.  We say $\rho_1$ and $\rho_2$ are \emph{bordant} if there is a $(n+1)$-manifold $M$ and a representation $\phi: \Gamma \to \Diff(M)$ such that $\partial M = N_1 \sqcup -N_2$ and such that the restriction of $\phi(\gamma)$ to $N_i$ agrees with $\rho_i(\gamma)$ for each $\gamma \in \Gamma$.  
\end{defn}

For fixed $\Gamma$ and $n$, this notion of bordism gives an equivalence relation (standard techniques can be used to smooth a gluing of two actions that agree on a glued boundary), and bordism classes of group actions form a group $\Delta(n, \Gamma)$ under disjoint union.  This group is considered to be \emph{trivial} if it reduces to the ordinary (oriented) bordism group $\Omega_n$.   This setting generalizes both our extension problem (by not requiring diffeomorphisms to be isotopic to the identity) and Browder's notion of the bordism group $\Delta_n$ of diffeomorphisms of $n$-manifolds introduced in \cite{Browder}.  In our notation, $\Delta(n, \bZ)$ is Browders $\Delta_n$.   Similar definitions have appeared elsewhere in the literature, see for example \cite{Wasserman} for the case where $\Gamma$ is a compact Lie group.

The groups $\Delta(n, \bZ)$ have been computed for all $n$ by the combined work of Kreck, Melvin, Bonahon, and Edmonds-Ewing \cite{Kreck, Melvin, Bonahon, EdmondsEwing}.   To the best of our knowledge, this is the only case of a finitely generated, infinite group whose bordism groups are known, and known not to be trivial.  It seems to the authors that computing $\Delta(n, \Gamma)$ is beyond the techniques of this paper.  We instead propose two particularly interesting next cases for study.  

\begin{problem} 
Find nontrivial elements in $\Delta(1, \bZ \times \bZ)$.  
\end{problem} 
\begin{problem}
For the group $\Gamma$ defined in Proposition 4.1, find nontrivial elements in $\Delta(2, \Gamma)$.  
\end{problem} 

Both the extension and the bordism problem are already quite challenging in dimension 2.  We address the case of groups acting on tori and spheres focusing on $\Diff_0(M)$ here.  The extension problem for group actions on higher genus surfaces seems more difficult.  For instance, our dynamical approach in the torus case uses torsion elements, and $\Diff_0(S)$ is torsion free provided $S$ has genus at least 2 (see \cite{putmanaction} for a proof of this fact which is originally due to Hurwitz).   Furthermore, by Earle--Eells \cite{EarleEells}, the group $\Diff_0(S)$ is contractible, so there can be no cohomological obstructions to a continuous section (i.e obstruction classes in $\mathrm{B}\Diff_0(S))$.  However, it is possible that the cohomology of $\Diff_0(S)^\delta$, which is known to be nontrivial, could be used to give an obstruction class for the extension problem.  As a concrete instance, for any orientable surface $S$ there is a surjection of $H_3(\Diff_0^\delta(S); \bQ)$ to $\bR^2$ \cite{Bowden} (see also \cite{nariman2015stable}), the two (continuously varying) classes come from integrating Godbillon-Vey classes of foliations on flat bundles.   
\begin{problem}
For which, if any, $3$-manifolds with surface boundary do these ``Godbillon-Vey" classes provide obstructions to extending group actions on the boundary? 
\end{problem}

We also pose the following general problem.
\begin{problem}
Does there exist an example of a finitely generated group $\Gamma \subset \Diff_0(\Sigma_g)$, $g \geq 2$ so that the embedding of $\Gamma$ into $\Diff_0(\Sigma_g)$ is not nullbordant? 
\end{problem}

\subsection*{Acknowledgment}SN was partially supported by  NSF grant DMS-1810644 and he would like to thank the Isaac Newton Institute for Mathematical sciences for support and hospitality during the program ``homotopy harnessing higher structures".  He thanks Oscar Randal-Williams and S\o ren Galatius for helpful discussions. KM was partially supported by NSF grant DMS-1606254.
We also thank Sander Kupers for his comments on the early draft of this paper, and the referee for many helpful remarks.  

\section{Obstruction classes and a proof when $M$ is irreducible}\label{sec:prelim}

All manifolds, for the remainder of the paper, will be assumed smooth and orientable.  We assume basic familiarity with classifying spaces for topological groups, the reader may refer to \cite{MannTshishiku} for a very brief introduction in the context of related section problems for diffeomorphism groups, or \cite{morita2001geometry} for more detailed background.   

\subsection{Obstruction classes} 
Let $M$ be a manifold with boundary.  An extension $\phi$ of an action $\rho\colon \Gamma \to \Diff_0(\partial M)$ gives rise to a commutative diagram on the cohomology of the classifying spaces  

\begin{displaymath}
   \xymatrix @M=3pt {
          & H^*(\mathrm{B} \Diff_0(M)) \ar[dl]_{\phi^*} \\
        H^*(\mathrm{B} \Gamma)      & H^*(\mathrm{B} \Diff_0(\partial M)) \ar[l]^{\rho^* \hspace{.4cm}}  \ar[u]^{r^*}  }
\end{displaymath}

We define an \emph{obstruction class for $\rho$} to be any nonzero element of $\rho^*(\ker(r^*)) \subset H^*(\mathrm{B} \Gamma)$. 
It is immediate from the diagram above that if an extension $\phi$ of $\rho$ exists, then all obstruction classes vanish.   

To apply this in the setting of Theorem \ref{main}, we wish to find group homomorphisms
 $\rho\colon \Gamma\to \Diff_0(T)$ 
so that the induced map $\rho^*\colon H^*(\BDiff_0(T);\bQ)\to H^*(\mathrm{B}\Gamma;\bQ)$ is non-trivial on the generators of $H^2(\BDiff_0(T);\bQ)$. 
(We have used $\bQ$ coefficients here because it will be helpful much later in the proof; however, for the moment the reader may just as well work integrally.) 

It is a theorem of Earle--Eells \cite{EarleEells} that the inclusion of $\text{SO}(2) \times \text{SO}(2)$ into $\Diff_0(S^1 \times S^1)$ is a homotopy equivalence.   
Thus, $\BDiff_0(T^2) \simeq \bC P^\infty \times \bC P^\infty$ with cohomology generated by two classes in degree 2, corresponding to the Euler classes of each factor.  

Let $\Gamma =  \pi_1(\Sigma_g)$ be the fundamental group of a surface of genus $g \geq 2$.
For the standard embedding of $\Gamma$ as a lattice in $\mathrm{PSL}(2, \bR) \subset \Diff_0(S^1)$ (equivalently, the holonomy representation of the unit tangent bundle of $\Sigma_g$ equipped with a hyperbolic metric), the induced map on cohomology  
 $ H^*(\mathrm{B} \Diff_0(S^1);\bQ) \to H^*(\mathrm{B} \Gamma;\bQ)$ is not the zero map; indeed, as is well known, the pullback of a generator of $H^*(\BDiff_0(S^1);\bZ)$ (thought of under the standard inclusion into $H^*(\BDiff_0(S^1);\bQ)$)
 evaluated on the fundamental class of $H^*(B \Gamma;\bQ) \cong H^*(\Sigma_g;\bQ)$ gives the Euler characteristic of $\Sigma_g$.  Thus, these representations $\Gamma \to \Diff_0(S^1) \times \Diff_0(S^1) \to \Diff_0(T^2)$ via inclusion of $\Gamma$ into either $\Diff_0(S^1)$ factor give candidates for obstruction classes for extensions whenever $\partial M \cong T^2$.  
The proof of Theorem \ref{main} in the case $M$  is not diffeomorphic to $D^2 \times S^1$ now simply consists in showing that, for any such manifold $M $ with $\partial M \cong T^2$, there is an obstruction class in $H^2(\BDiff_0(T^2);\bQ)$.   

It is a well known fact in dimension $2$, and in dimension $3$ a theorem of Cerf (\cite{cerf1961topologie}) based on Smale's conjecture which was later proved by Hatcher (\cite{hatcher1983proof}), that the inclusion $\Diff(M)\hookrightarrow\mathrm{Homeo}(M)$ is a weak homotopy equivalence. Therefore, \Cref{main} also implies that the map
\[H^2(\mathrm{BHomeo}_0(T);\bQ)\to H^2(\mathrm{BHomeo}_0(M);\bQ),\]
sends one of the generators of $H^2(\mathrm{BHomeo}_0(T);\bQ)$ to zero, giving an obstruction to an extension by homeomorphisms.

\subsection{On the irreducible case}
As a warm-up and first case, we discuss the case where $M$ is irreducible, using the following result of Hatcher.  
\begin{thm}[Hatcher \cite{MR0420620}] \label{thm:hatcher}
If $M$ is an orientable, Haken 3-manifold which is not a closed Seifert manifold, then the group of diffeomorphisms that restrict to the identity on the boundary of $M$ has contractible components.
\end{thm}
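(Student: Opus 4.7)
The strategy is to induct on the length of a Haken hierarchy for $M$. Since $M$ is Haken, there is a finite sequence $M = M_0, M_1, \ldots, M_n$ in which each $M_{i+1}$ is obtained by cutting $M_i$ along a two-sided incompressible (and $\partial$-incompressible) surface $S_i$, and $M_n$ is a disjoint union of $3$-balls. The induction is on $n$: the inductive hypothesis at stage $i$ is that the components of $\Diff_\partial(M_i)$ are contractible.

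The base case $M = B^3$ is Hatcher's proof of the Smale conjecture \cite{hatcher1983proof}. For the inductive step, fix an incompressible surface $S \subset M$ from a hierarchy. The key technical ingredient is a \emph{parametric} Waldhausen-type theorem: for any smooth family $\phi \colon D^k \to \Diff_\partial(M)$ whose restriction to $\partial D^k$ lies in the stabilizer of $S$, there is a homotopy rel $\partial D^k$ to a family $\tilde{\phi}$ with $\tilde{\phi}_t(S) = S$ for all $t$. Granting this, one has a fibration sequence
$$\Diff_\partial(M, S) \lra \Diff_\partial(M) \lra \Emb(S, M)_S,$$
in which the base (the component containing the inclusion) is weakly contractible by the parametric result, so it suffices to analyze the fiber. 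Cutting $M$ along $S$ produces a simpler manifold $M'$ with strictly shorter hierarchy, and $\Diff_\partial(M, S)$ fits into a further fibration with fiber $\Diff_\partial(M')$ (to which the inductive hypothesis applies) and base a component of $\Diff(S)$, which has contractible components by Earle--Eells and its extension to surfaces with boundary.

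The main obstacle is the parametric incompressible-surface theorem. The classical argument for a single diffeomorphism $\phi$ puts $\phi(S)$ into normal form with respect to a triangulation or sweepout of $M$ compatible with $S$, then uses innermost-disk and outermost-arc moves to remove intersections between $\phi(S)$ and $S$. In families one must carry this out smoothly over parameters, handling the codimension-one singular events (tangencies, births and deaths of intersection circles) via parametrized general position and Morse-theoretic arguments. The exclusion of closed Seifert fibered manifolds is exactly what rules out nontrivial families of surface isotopies (such as rotations along regular fibers, or Dehn twists along vertical tori) that would obstruct a global normalization of the family $\phi_t(S)$. This parametric normalization is the technical heart of Hatcher's original paper and is where essentially all of the difficulty resides.
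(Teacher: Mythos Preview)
The paper does not prove this theorem. It is stated as a result of Hatcher \cite{MR0420620} and used as a black box in the proof of Proposition~\ref{prop:irreducible}; there is no argument in the paper to compare your proposal against.

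That said, your sketch is a faithful outline of the strategy in Hatcher's original paper: induction on the length of a Haken hierarchy, with the base case $\Diff(B^3,\partial)\simeq *$ supplied by the Smale conjecture (proved later in \cite{hatcher1983proof}), and the inductive step carried by a parametrized version of Waldhausen's isotopy theorem for incompressible surfaces. The fibration
\[
\Diff_\partial(M,S)\lra \Diff_\partial(M)\lra \Emb(S,M)_S
\]
and the reduction to the cut-open manifold are the correct framework, and you are right that the real work lies in the family version of the innermost-disk argument. One small correction: in the inductive step, after cutting along $S$ the relevant comparison is between $\Diff_\partial(M,S)$ and $\Diff_\partial(M')$ directly (the new boundary created by cutting is fixed pointwise), so no separate $\Diff(S)$ factor appears; what you need instead is that the components of the space of embeddings of $S$ (rel $\partial S$ if $S$ has boundary) are contractible, which is the Earle--Eells input. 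Your description of why closed Seifert manifolds are excluded is also slightly off: the issue is not that surface isotopies obstruct normalization, but that for those manifolds $\Diff_0$ genuinely has nontrivial homotopy (e.g.\ an $S^1$ from fiberwise rotation), so the conclusion is simply false there.
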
 

We prove the following.  
\begin{prop}  \label{prop:irreducible}
Let $P$ be an irreducible 3-manifold with $\partial P \cong T^2$, and assume that $P$ is not diffeomorphic to  $D^2 \times S^1$.  Then the map induced by the restriction to the boundary map 
\[H^2(\BDiff_0(T);\bQ)\to H^2(\BDiff_0(P);\bQ),\]
has a nontrivial kernel.

\end{prop}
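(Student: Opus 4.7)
The approach is to combine Hatcher's theorem (Theorem~\ref{thm:hatcher}) with Waldhausen's work on Seifert fibered $3$-manifolds to identify $\Diff_0(P)$ homotopically as an Eilenberg--MacLane space $K(A,1)$, where $A$ is a subgroup of $\pi_1(\Diff_0(T^2)) = \bZ^2$ of rank at most $1$. The kernel statement then follows immediately because $\BDiff_0(P) \simeq K(A, 2)$ has $H^2(-;\bQ) \cong A \otimes \bQ$, which maps into $\bQ^2 = H^2(\BDiff_0(T^2); \bQ)$ with at most a one-dimensional image.

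First, since $P$ is irreducible with $\partial P \cong T^2$ and $P \not\cong D^2 \times S^1$, an innermost-disk argument shows $\partial P$ is incompressible, hence $P$ is Haken; since $\partial P \neq \varnothing$, $P$ is not a closed Seifert manifold. Hatcher's theorem therefore applies: $\Diff(P, \mathrm{rel}\, \partial P)$ has contractible components. The restriction map $r\colon \Diff_0(P) \to \Diff_0(T^2)$ is a Serre fibration onto its image (by Palais), whose fiber over the identity is a union of components of $\Diff(P, \mathrm{rel}\, \partial P)$ and hence has contractible components. Combined with the Earle--Eells identification $\Diff_0(T^2) \simeq T^2$, the long exact sequence of this fibration yields $\pi_k(\Diff_0(P)) = 0$ for $k \geq 2$ and an injection $A := \pi_1(\Diff_0(P)) \hookrightarrow \bZ^2$. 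Hence $\Diff_0(P) \simeq K(A,1)$ and $\BDiff_0(P) \simeq K(A, 2)$.

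The heart of the argument is to show $\mathrm{rank}(A) \leq 1$. If $P$ is Seifert fibered, Waldhausen's uniqueness of the Seifert fibration (for Haken $3$-manifolds with incompressible torus boundary other than $D^2 \times S^1$) ensures the base orbifold $B$ is not a disk, so $\chi(B) \leq 0$ and $\Diff_0(B)$ is contractible by Gramain's theorem (in its orbifold incarnation). The resulting fibration $S^1_{\mathrm{fiber}} \to \Diff_0(P) \to \Diff_0(B)$ then forces $\Diff_0(P) \simeq S^1$, so $\mathrm{rank}(A) = 1$. If $P$ is not Seifert fibered, any nontrivial element of $\pi_1(\Diff_0(P))$ would, via Raymond's theorem linking effective circle actions to Seifert fibrations, produce a Seifert fibration of $P$ (using the Smale-type theorem for Haken manifolds to upgrade the loop to an actual $S^1$-action), contradicting the non-Seifert assumption; so $A = 0$. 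In either case $\mathrm{rank}(A) \leq 1$.

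\textbf{Main obstacle.} The most delicate point is passing from a loop in $\Diff_0(P)$ (an $S^1$-family of diffeomorphisms) to a genuine $S^1$-action on $P$ in the non-Seifert case; doing this honestly requires the Hatcher--Ivanov--Gabai form of the Smale conjecture for Haken manifolds, which is a substantial input beyond the 1976 paper cited above. A more self-contained route works entirely at the level of mapping class groups by studying the ``collar twist'' homomorphism $\bZ^2 \to \pi_0(\Diff(P, \mathrm{rel}\, \partial P))$ and showing, via $3$-manifold topology and Waldhausen's characterization of Seifert manifolds by their fundamental group structure, that the kernel of this map has rank at most $1$ unless $P \cong D^2 \times S^1$; this is presumably the route best suited to the authors' framework.
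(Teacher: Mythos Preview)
Your overall architecture matches the paper's: reduce to showing that $\pi_1(\Diff_0(P)) \to \pi_1(\Diff_0(T)) \cong \bZ^2$ is not rationally surjective, using Hurewicz and the fibration over $\Diff_0(T)$ with fiber a union of components of $\Diff(P,\partial)$. (One small slip: in your first paragraph the $H^2$ map is written in the wrong direction; what you mean is that $H^2(\BDiff_0(P);\bQ)$ has dimension at most one, so the map \emph{from} $\bQ^2$ has nontrivial kernel.)

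The genuine difference is in the non-Seifert case. The paper does not attempt to upgrade a loop to a circle action. Instead it splits according to whether $\pi_1(P)$ has nontrivial center (equivalent to your Seifert/non-Seifert split, by Waldhausen), and in the centerless case works directly with the connecting homomorphism $\pi_1(\Diff_0(T)) \to \pi_0(\Diff(P,\partial))$: this sends the two generators of $\bZ^2$ to the boundary Dehn twists, which act on $\pi_1(P,x)$ (for $x\in T$) by conjugation by the boundary loops. Since $\pi_1(T,x)\hookrightarrow\pi_1(P,x)$ by the loop theorem plus irreducibility (otherwise $P\cong D^2\times S^1$), and since $\pi_1(P)$ has trivial center, at least one of these conjugations has infinite order; hence the image of $\pi_1(\Diff_0(P))$ in $\bZ^2$ has rank at most one. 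This is exactly the ``collar twist'' route you propose at the end, and it uses only Theorem~\ref{thm:hatcher} and the loop theorem---no Smale-conjecture-for-Haken-manifolds input. Your primary route does have the gap you flag: a nontrivial class in $\pi_1$ of a topological group need not be represented by a homomorphism $S^1\to G$, so Raymond's theorem is unavailable without the stronger Hatcher--Ivanov identification of the full homotopy type.

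In the Seifert case the paper simply quotes Hatcher's result that $\Diff_0(P)\simeq S^1$ for $P\ne D^2\times S^1$, rather than rederiving it through the base orbifold. This sidesteps having to set up and justify the fibration $\Diff_0(P)\to\Diff_0(B)$ and the orbifold incarnation of Gramain's theorem, both of which would need care (for instance, $B$ can be a disk with several cone points, so ``$B$ is not a disk'' must be read at the orbifold level).
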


\begin{proof}
Dually, it is enough to show that the boundary restriction map
\[
H_2(\BDiff_0(P);\bQ)\to H_2(\BDiff_0(T);\bQ),
\]
does not surject onto $H_2(\BDiff_0(T);\bQ)$.
Note that $\BDiff_0(P)$ and $\BDiff_0(T)$ are simply connected. Hence, by the Hurewicz theorem
\[
H_2(\BDiff_0(P);\bZ)\cong \pi_2(\BDiff_0(P))=\pi_1(\Diff_0(P)),
\]
\[
H_2(\BDiff_0(T);\bZ)\cong \pi_2(\BDiff_0(T))=\pi_1(\Diff_0(T))\cong \bZ^2.
\]
Therefore, it is enough to show that the map $\pi_1(\Diff_0(P))\to \pi_1(\Diff_0(T))$ is not surjective after tensoring with $\bQ$. Here we  have two cases:

\noindent {\bf Case 1}: For a fixed point $x\in T$, suppose $\pi_1(P, x)$ has nontrivial center. 
 Since $P$ is a prime manifold with torus boundary, it is Haken, and by a theorem of Waldhausen \cite{MR0236930} it is therefore Seifert fibered. 
By  the theorem of Hatcher (see \cite{hatcher1999spaces}) 
the group $\Diff_0(P)$  has the homotopy type of the circle, unless $P$ is diffeomorphic to $D^2\times S^1$, which is excluded by the hypothesis.  Therefore, $\pi_1(\Diff_0(P))\otimes \bQ\to \pi_1(\Diff_0(T))\otimes \bQ$ cannot be surjective.

\noindent{\bf Case 2:} Suppose $\pi_1(P, x)$ has trivial center. By considering the long exact sequence for the homotopy groups of the fibration $\Diff(P,\partial)\to \Diff(P)\to \Diff(T)$, it is enough to show that the map
\[
\pi_1(\Diff(T))\to \pi_0(\Diff(P,\partial)),
\]
sends at least one of the generators to a non-torsion mapping class. 

To show that a Dehn twist around the boundary is non-trivial, we look at its action on $\pi_1(P, x)$. This action is given by the conjugation of the loops on the boundary torus. If $\pi_1(P, x)$ has no center, then these Dehn twists are non-trivial in the mapping class group. To show that the nontrivial mapping class induced by the Dehn twist around a generator of the boundary is non-torsion, we show that its conjugation action on $\pi_1(P, x)$ is non-torsion. To do so, it is enough to show that the map
\[
\pi_1(T,x)\to \pi_1(P,x),
\]
 is in fact injective. If there is a non-trivial kernel, the loop theorem \cite{Papakyriakopoulos} implies that there is  a simple closed curve on $T$ that bounds a properly embedded disk $D$ in $P$. But now the union of $D$ and $T$ gives an embedded sphere in $P$ and since $P$ is irreducible, this sphere has to bound a ball. Therefore $P$ would be diffeomorphic to $D^2\times S^1$ which contradicts the hypothesis.
\end{proof}

With this argument for the irreducible case in hand, one can obtain Theorem \ref{thm:torus_general} for extensions to $\Diff^1(M)$ 
with a short dynamical argument.  The dynamical argument is given in Section \ref{sec:dynamical}, and can be read independently from Section \ref{sec:main_proof}.    However, for the moment we continue with the cohomological approach, building towards a proof of \Cref{main}.


\section{Proof of Theorem \ref{main}}  \label{sec:main_proof}


The broad strategy of this proof is to use semi-simplicial spaces that parametrize different ways of cutting $M$ along separating spheres, motivated by the desire to reduce the situation to the irreducible case above.  If $S$ is an embedded sphere in $M$ that separates $P$, then the pointwise stabilizer $\text{Stab}(S) \subset \Diff_0(M)$ consisting of diffeomorphisms that are the identity on $S$, sits in a zig-zag
 \[
\Diff_0(P\backslash\text{int}(D^3))\xleftarrow{\text{res}} \text{Stab}(S)\hookrightarrow \Diff_0(M),
 \]
where the left map is the restriction map. In fact, for any separating sphere $S$, we have the map
\[
\mathrm{B}\text{Stab}(S)\to \BDiff_0(T),
\]
induced by the restriction to the boundary. We first use Proposition \ref{prop:irreducible} to prove that 
\[
H^2(\BDiff_0(T);\bQ)\to H^2(\mathrm{B}\text{Stab}(S);\bQ),
\]
has a nontrivial kernel. Using the semi-simplicial techniques and a spectral sequence argument, we then prove that for a non-irreducible $M$, the map 
\[
H^2(\BDiff_0(T);\bQ)\to H^2(\BDiff_0(M);\bQ),
\]
also has a nontrivial kernel.

\subsection{Semi-simplicial resolution for $\BDiff_0(M)$} We want to make an inductive argument by cutting $M$ into factors with fewer prime factors. To do so, we first define an auxiliary  simplicial complex of {\em sphere systems} from which we construct a semi-simplicial space on which $\Diff_0(M)$ acts.
 As these play a key role in the proof, the reader unfamiliar with semisimplicial spaces may wish to consult \cite{ebert2017semi} for an introduction. 
\begin{defn}
For a $3$-manifold $M$, a {\em sphere system} $s$ consists of a finite collection of disjoint parametrized essential (i.e. not bounding a ball)
 spheres in $M$ such that as we cut $M$ along the spheres in $s$, the connected components are either prime manifolds with disjoint balls removed or $S^3$ with disjoint balls removed. A sphere system is allowed to have parallel spheres or spheres that are isotopic to the sphere boundary components of $M$. 
\end{defn}
\begin{defn}
The sphere complex $\mathcal{S}(M)$ is the simplicial complex whose vertices are sphere systems in $M$ and a $p$-simplex is given by $p+1$ disjoint sphere systems. 
\end{defn}
We shall define a semisimplicial space using $\mathcal{S}(M)$ as follows.
\begin{defn}
Let $X_{\bullet}(M)$ be a semisimplicial space whose space of $0$-simplices $X_0(M)$ has the same underlying set as the set of vertices of $\mathcal{S}(M)$. We topologize $X_0(M)$ as the subspace of configuration space of spheres in $M$
\[
\coprod_n \text{Emb}(S^2,M)^n/\Sigma_n,
\]
where $\text{Emb}(S^2,M)$ is the space of smooth embeddings with  $C^{\infty}$-topology and $\Sigma_n$ is  the permutation group on $n$ letters that permutes the spheres. The space of $p$-simplices $X_p(M)$ is the subspace of $X_0(M)^{p+1}$ given by $(p+1)$-tuples of disjoint sphere systems. The $i$-th face maps are given by omitting the $i$-th sphere system. 
\end{defn}
Our goal in this section is to prove that when $M$ is not prime, the realization $|X_{\bullet}(M)|$ is weakly contractible. We first show that $\mathcal{S}(M)$ is a contractible simplicial complex whenever it is nonempty. 

\begin{lem}\label{claim2}
Let $M$ be a $3$-manifold which is not  prime, then the simplicial complex $\mathcal{S}(M)$ is contractible.
\end{lem}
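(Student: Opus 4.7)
The plan is to adapt Hatcher's classical surgery argument for sphere complexes. First I would establish non-emptiness: since $M$ is not prime, the Kneser--Milnor prime decomposition yields a finite collection of disjoint essential spheres cutting $M$ into prime pieces with open balls removed, which is precisely a sphere system, giving a vertex of $\mathcal{S}(M)$.

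To prove contractibility, fix a basepoint vertex $s_0 \in \mathcal{S}(M)$. By simplicial approximation it suffices to show that any simplicial map $f\colon K \to \mathcal{S}(M)$ from a finite simplicial complex $K$ is simplicially homotopic to a map landing in the closed star of $s_0$, which is contractible as it is a simplicial cone. The main tool is sphere surgery. For any vertex $v \in K$ with $f(v)$ not disjoint from $s_0$, put $f(v)$ in general position with $s_0$ so that the spheres of $f(v)$ meet those of $s_0$ transversely in a finite disjoint union of circles. Pick an innermost intersection circle $c$ on some sphere $\Sigma_0 \in s_0$: then $c$ bounds a disk $D \subset \Sigma_0$ whose interior is disjoint from $f(v)$, and $c$ also bounds two disks $D_1, D_2$ on the sphere $\Sigma \in f(v)$ that contains it. Replacing $\Sigma$ in $f(v)$ by two pushed-off parallel copies of $D_1 \cup D$ and $D_2 \cup D$ yields a new sphere system with strictly fewer intersections with $s_0$.

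The point requiring verification is that the output of surgery is again a valid sphere system in the sense of the paper. This uses the definition's explicit allowance of parallel spheres and of $S^3$-with-balls pieces: cutting $M$ along the surgered system yields the same complementary pieces as cutting along the original $f(v)$, possibly together with an extra $S^3$-with-two-balls-removed piece arising from the thin product region between the two new parallel spheres, which is allowed.

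The main difficulty will be parametrizing this over all of $K$ simultaneously. I would carry out surgeries at all vertices of $K$ in a canonical innermost-first order across the entire image $f(K)$. The obstacle is compatibility: if $v, v'$ are adjacent in $K$ then $f(v)$ and $f(v')$ are disjoint sphere systems, and the surgeries on them along $s_0$ must remain disjoint afterwards. This is enforced by choosing the innermost disks $D$ on $s_0$ inductively so that each chosen disk avoids the (finitely many) sphere systems at neighboring vertices of the current vertex in $K$, using that $K$ is finite and $s_0$ has only finitely many intersections with the image. Each application of this procedure strictly decreases the total number of intersections between $f(K)$ and $s_0$, so iteration terminates in a simplicial homotopy from $f$ into the star of $s_0$, completing the proof.
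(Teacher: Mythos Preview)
Your approach is the same Hatcher-style innermost-disk surgery argument that the paper uses, but two steps in your sketch do not yet work.

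First, the surgered collection need not be a sphere system. After replacing $\Sigma$ by the two spheres $D_1\cup D$ and $D_2\cup D$, one of them may bound a ball in $M$ and hence be inessential, which the definition forbids. (For a concrete case take $M=A\# B$ with $A$ irreducible, $f(v)=\{\Sigma\}$ the connect-sum sphere, and $D$ lying on the $A$-side: one of $D_i\cup D$ then bounds a ball in $A$.) Your description of the complementary pieces as ``the same as before plus an extra $S^3$-with-two-balls'' is therefore false in general. The remedy is to discard any inessential sphere produced by the surgery (at least one of the two is always essential) and then to re-verify that what remains still cuts $M$ into prime-minus-balls or $S^3$-minus-balls pieces; the paper carries out a version of this step.

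Second, and more substantively, your compatibility argument is incomplete. You pick $D$ innermost on $s_0$ relative to the circles of $f(v)\cap s_0$, but the interior of $D$ may still meet a sphere $S$ from a \emph{neighbouring} system $f(v')$; then the surgered $v$ is no longer disjoint from $f(v')$ and you cannot build the simplicial homotopy. Saying you will ``choose $D$ to avoid neighbouring sphere systems'' is not a fix: $D$ is forced once the innermost circle is fixed, it is not a free choice. The paper's key device is to look at \emph{all} intersection circles of $w$ with \emph{all} vertices of $f(K)$ at once, pass to a maximal pairwise-disjoint subfamily of these circles, and take $C$ innermost in that subfamily. Then for any sphere $S$ in the star of $v$, a circle of $S\cap w$ lying in $\mathrm{int}(D)$ would be disjoint from $C$ (since $S\cap S(v)=\emptyset$), contradicting either maximality of the family or innermost-ness of $C$; hence $\mathrm{int}(D)$ is automatically disjoint from every sphere in the star of $v$, and the surgered vertex remains adjacent to everything $v$ was adjacent to.
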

\begin{proof}
We want to show that for all $k$, any continuous map $f\colon S^k \to \mathcal{S}(M)$ is nullhomotopic.  Without loss of generality, we can assume that for  a triangulation $K$ of $S^k$,  the map $f$ is piecewise linear. To find a nullhomotopy for the map $f$, it is enough to homotope it so that its image lies in the star of a vertex in $\mathcal{S}(M)$.

Recall that each vertex in $f(K)$ is a sphere system, with higher-dimensional simplices given by disjoint sphere systems. By the transversality theorem, we can slightly perturb $f$ so that sphere systems represented by the vertices of $f(K)$ are pairwise transverse. Let $w\in \mathcal{S}(M)$ be a sphere system that  is transverse to each of the sphere systems represented by the set of vertices in $f(K)$.  
We will show how to produce a homotopy of $f$ which decreases the (finite) number of circles in the intersection of $w$ and the spheres in $f(K)$.  Applying this procedure iteratively gives a homotopy of $f$ to a map with image in  $\text{Star}(w)$. 

The intersections of spheres in $w$ with the vertices of $f(K)$ give a collection of circles on the spheres of $w$.  From this collection, choose a maximal family of disjoint circles, and let $C$ be an innermost circle in this family.  Then $C$ is given by the intersection of a sphere $S(v)$ in a vertex $v = f(x)\in f(K)$ and a sphere $S(w)$ in the system $w$, and {\em innermost} means that $C$ bounds a disk $D$ on $S(w)$ whose interior is disjoint from all spheres in the maximal collection.
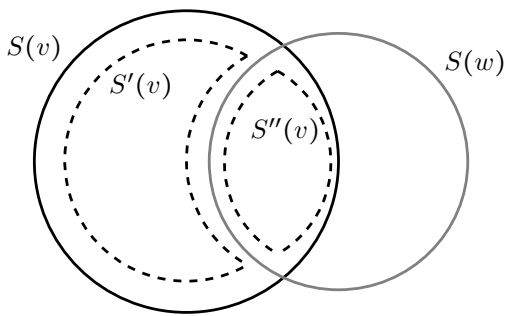
\begin{figure}[h] 

 \begin{tikzpicture}[node distance=6cm]
 \draw [line width=1.05pt] (0,0) circle (2cm);
  \draw [gray, line width=1.05pt] (2,0) circle (1.7cm);
\draw [dashed, line width=1.05pt] (0.8,1.4) arc (60:300:1.6cm);
\draw [dashed, line width=1.05pt] (0.8,1.4) arc (120:240:1.6cm);
\draw [dashed, line width=1.05pt] (1.2,1.2) arc (120:240:1.4cm);
\draw [dashed, line width=1.05pt] (1.2,1.2) arc (60:-60:1.4cm);
\node at (-2, 1.5) {$S(v)$};
\node at (-0.6, 1) {$S'(v)$};
\node at (1.3, 0.4) {$S''(v)$};
\node at (3.8, 1.3) {$S(w)$};
 \end{tikzpicture}
 \caption{Surgery on spheres in one dimension lower}\label{surgery} \end{figure} 
 
We can cut $S(v)$ along the circle $C$  and glue two copies of this $2$-disk to obtain two disjoint embedded spheres $S'(v)$ and $S''(v)$ (see \Cref{surgery}). We parametrize these spheres arbitrarily. By considering nearby parallel copies, we can assume that $S(v)$, $S'(v)$ and $S''(v)$ are disjoint. Note that at least one of the spheres $S'(v)$ and $S''(v)$ is essential (i.e. does not bound a ball).  Now replace $S(v)$ by the two spheres $S'(v)$ and $S''(v)$ if both $S'(v)$ and $S''(v)$ are separating,  and if just one of them is separating we replace $S(v)$ with that one. In this way, we obtain a new vertex $v'\in \mathcal{S}(M)$. By choosing nearby parallel copies of the spheres, we can assume that the vertex $v'$ is adjacent to $v$ i.e. their corresponding sphere systems are disjoint.

If we choose $S'(v)$ and $S''(v)$ sufficiently close to the $2$-disk in $S(v)$ that bounds $C$, then any sphere $S$ in the sphere systems in the star of $v$ which intersected $S'(v)$ or $S''(v)$ would also intersect this $2$-disk.  However, this cannot happen: since $S \cap S(w) = \emptyset$, and $C$ was chosen to be an innermost circle among a maximal family of disjoint circles given by intersections with $S(w)$, no disjoint sphere $S$ in the sphere systems in the star of $v$ can intersect the $2$-disk bounded by $C$.  Thus, no vertex in the star of $v$ intersects $S'(v)$ and $S''(v)$, so our modified sphere system  $v'$ 
 remains disjoint from all the sphere systems that $v$ is disjoint from.  In other words, $v'$ is adjacent to all vertices in the star of $v$. Therefore, we have a simplicial homotopy $F\colon K\times [0,1]\to \mathcal{S}(M)$ such that $F(-,1)$ is the same as $F(-,0)$ on all vertices but $x$ and $F(x,1)=v'$. Note that the vertices in the image $F(-,1)\colon K\to \mathcal{S}(M)$  have fewer circles in their intersection with $S(w)$. By repeating this process for all spheres in the sphere system $w$, we could homotope the map $f$ to a map whose image lies in the star of $w$. Therefore, $f$ is nullhomotopic.
\end{proof}

\begin{rem} \label{weakly_CM}
Note that the same argument implies that the link $\text{Lk}_{\sigma}\mathcal{S}(M)$ of a $p$-simplex $\sigma$ in $\mathcal{S}(M)$ is also contractible. Because as we cut along the sphere systems in $\sigma$ we obtain union of $3$-manifolds with sphere boundaries. Hence the sphere complex of each piece is contractible. Therefore, $\text{Lk}_{\sigma}\mathcal{S}(M)$ which is the join of the sphere complex of pieces is also contractible.
\end{rem}
 
A complex which is not only contractible but has the property that the link of each simplex is also contractible is called  {\it weakly Cohen-Macaulay} of dimension infinity.  
Now we use the {\it generalized coloring lemma} (\cite[Theorem 2.4]{galatius2014homological}) for such complexes to prove the following.

\begin{prop}\label{prop1} The realization $|X_{\bullet}(M)|$ is weakly contractible.
\end{prop}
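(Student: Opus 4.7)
The plan is to apply the generalized coloring lemma, Theorem 2.4 of Galatius--Randal-Williams \cite{galatius2014homological}, to the simplicial complex $\mathcal{S}(M)$ topologized by the $C^\infty$-topology on embeddings. That result says, roughly, that if $Y$ is a topologized simplicial complex which is weakly Cohen-Macaulay of dimension $n$ in a suitably parametrized sense, then the realization of the associated semi-simplicial space of ordered simplices — whose $p$-simplices are ordered $(p+1)$-tuples of pairwise disjoint vertices, topologized via the ambient embedding space — is $(n-1)$-connected. By Lemma \ref{claim2} together with Remark \ref{weakly_CM}, $\mathcal{S}(M)$ is weakly Cohen-Macaulay of dimension $\infty$, and $X_\bullet(M)$ is by construction precisely the ordered-simplex semi-simplicial space associated to $\mathcal{S}(M)$. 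So in principle the coloring lemma should immediately yield weak contractibility of $|X_\bullet(M)|$.

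To make this precise I would unwind what weak contractibility requires: given any map $f\colon S^k \to |X_\bullet(M)|$ one needs a nullhomotopy. After simplicial approximation with respect to a triangulation of $S^k$, one can assume $f$ factors through the realization of a finite subcomplex of $X_\bullet(M)$, so that the sphere systems appearing in its image form a compact family in $M$. Following the combinatorial strategy of Lemma \ref{claim2} parametrically, the plan is to pick an auxiliary sphere system $w$ transverse to every sphere system in the image of $f$ (possible by parametric transversality), and then iteratively apply the innermost-disk surgery of Lemma \ref{claim2} in a family, reducing the circles of intersection of each $f(x)$ with $w$ until one arrives at a homotopic map whose image lies in the star of $w$. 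Coning off at $w$ yields the desired nullhomotopy.

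The principal obstacle is ensuring that the surgery procedure from Lemma \ref{claim2} — which in the proof of that lemma is carried out vertex-by-vertex in a discrete complex — can be performed continuously in the parameter $x \in S^k$. Two points need care: the choice of innermost circle varies discontinuously with the embedded spheres, so one must subdivide $S^k$ and glue local surgeries using a partition of unity; and one must check that the resulting family of sphere systems depends continuously on $x$ in the $C^\infty$-topology. This is exactly the kind of parametrization that the machinery of \cite{galatius2014homological} was designed to handle: it reduces the problem to the weakly Cohen-Macaulay property of the underlying discrete complex (which Lemma \ref{claim2} and Remark \ref{weakly_CM} supply) together with microfibration-type local extension properties for the topology on embedded sphere systems, which in turn follow from standard transversality and isotopy extension. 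Granting these inputs, the coloring lemma gives that $|X_\bullet(M)|$ is $n$-connected for every $n$, hence weakly contractible.
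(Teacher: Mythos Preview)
Your proposal identifies the right ingredients --- the weakly Cohen--Macaulay property of $\mathcal{S}(M)$ from Lemma~\ref{claim2} and Remark~\ref{weakly_CM}, and the machinery of \cite{galatius2014homological} --- but there is a genuine gap in how you combine them. The generalized coloring lemma (\cite[Theorem~2.4]{galatius2014homological}) is a statement about \emph{discrete} simplicial complexes; it does not by itself say anything about semi-simplicial \emph{spaces}. Your plan to bridge this by carrying out the innermost-disk surgery of Lemma~\ref{claim2} continuously in families is, as you yourself flag, the principal obstacle, and you do not actually resolve it: the choice of innermost circle is genuinely discontinuous, and appealing vaguely to ``microfibration-type local extension properties'' and ``partition of unity'' does not constitute a proof. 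Making a parametric surgery argument rigorous here would be substantially harder than anything in Lemma~\ref{claim2}.

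The paper avoids this difficulty entirely by separating the argument into two independent steps. First, one invokes the \emph{discretization} technique, \cite[Theorem~5.6]{galatius2014homological}: under mild hypotheses on how $X_p(M)$ sits inside $X_0(M)^{p+1}$ (satisfied here because sphere systems form an open subspace of a configuration space of embeddings), weak contractibility of the underlying semi-simplicial \emph{set} $X_\bullet^\delta(M)$ implies weak contractibility of the semi-simplicial \emph{space} $X_\bullet(M)$. This step handles all the topology and requires no parametric surgery whatsoever. Second, one shows $|X_\bullet^\delta(M)|$ is contractible: since $X_\bullet^\delta(M)$ is the complex of ordered simplices of $\mathcal{S}(M)$, and $\mathcal{S}(M)$ is weakly Cohen--Macaulay of dimension $\infty$, this follows from the (discrete) coloring lemma via \cite[Proposition~2.10]{MR3677942} or \cite[Theorem~3.9]{nariman2014homologicalstability}. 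You should reorganize your argument along these lines; the key missing citation is the discretization theorem, which is the tool that lets you forget the topology rather than fight it.
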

\begin{proof} Let $X_{\bullet}^{\delta}(M)$ be the underlying semisimplicial set of the semisimplicial space $X_{\bullet}(M)$. Using the powerful ``discretization" technique  from \cite[Theorem 5.6]{galatius2014homological}, the contractibility of the realization $|X_{\bullet}^{\delta}(M)|$ implies the weak contractibility of $|X_{\bullet}(M)|$.

The simplices in $\mathcal{S}(M)$ do not have a natural ordering on their vertices. For each ordering of vertices of a simplex in $\mathcal{S}(M)$, there is a corresponding simplex in the $\Delta$-complex $|X_{\bullet}^{\delta}(M)|$. But since $\mathcal{S}(M)$ is a weakly Cohen-Macaulay complex of dimension infinity by Remark \ref{weakly_CM}, the proofs of \cite[Proposition 2.10]{MR3677942} and also \cite[Theorem 3.9]{nariman2014homologicalstability} apply and show that the contractibility of $|X_{\bullet}^{\delta}(M)|$ follows from contractibility of $\mathcal{S}(M)$ using the {\it generalized coloring lemma} (\cite[Theorem 2.4]{galatius2014homological}).
\end{proof}
 
 The next step in the proof is to use the action of $\Diff_0(M)$ on the semisimplicial space $X_{\bullet}(M)$ to find a semisimplicial resolution for $\BDiff_0(M)$. But given that two different sphere systems are not necessarily isotopic, the action of $\Diff_0(M)$ on $X_{\bullet}(M)$ is not transitive and in fact it is not clear how to describe the set of the orbits of this action.  This creates a technical issue for us, as understanding the orbits will be useful in analyzing the spectral sequence for semisimplicial resolutions. To get around this, we first define a larger group $\SDiff(M)$ generated by the {\em slide diffeomorphisms} that contains $\Diff_0(M)$.  As we shall see, the spectral sequence for the action of this group is easier to study and it will be sufficient to prove \Cref{Main}.
 
 \subsubsection{Slide diffeomorphisms} 
 McCullough in \cite[\S 3]{MR925856} showed that the mapping class group of a compact orientable $3$-manifold $M$ is generated by four types of mapping classes. Let $S=\coprod S_i$ be a special sphere system as follows.
 
 \begin{defn}\label{spheresystems}
Let $S$ be a collection of disjoint smooth embeddings $\phi\colon S^2\hookrightarrow M$ of separating spheres. Let $M_0$, $M_1$, \dots, $M_k$, $M_{k+1}$,\dots, $M_{k+g}$ be the components of the manifold obtained from $M$ by cutting it along $S$ where $M_{k+i}$ is diffeomorphic to $S^2\times [0,1]$ for all $i>0$. Let $\hat{M_i}$ be the manifold obtained from $M_i$ by gluing a ball to every sphere boundary component. We say $S$ is a {\em special sphere system} if
\begin{itemize}
\item $\hat{M_i}$ is an irredicible manifold for all $i\leq k$.
\item $\hat{M_0}$ is diffeomorphic to $S^3$. 
\item For $1\leq i\leq k$, the manifold $\hat{M_i}$ is {\it not} diffeomorphic to $S^3$ and $M_i$ has exactly one sphere boundary component. 
\end{itemize}
\end{defn}
 
Let $M_i(S)$ be the components obtained from $M$ by cutting along $S$. Following McCullough, every diffeomorphism of $M$ is isotopic to the composition of diffeomorphisms of the following types
 \begin{enumerate}
 \item {\it Diffeomorphisms of the factors}. This is the subgroup of diffeomorphisms that restricts to the identity on $M_0(S)$; it is isomorphic to the product over all $i$ of $\Diff(M_i(S), \partial M_i(S))$. 
 \item {\it Permuting diffeomorphic factors}. If two factors $M_i(S)$ and $M_j(S)$ are diffeomorphic, we have elements in $\Diff(M)$ that leave $M_0(S)$ invariant, interchange $M_i(S)$ and $M_j(S)$ and restrict to the identity on the other factors.
 \item {\it Spinning factors that are diffeomorphic to $S^2\times [0,1]$}. For the factors $M_{k+i}(S)$ that are diffeomorphic to $S^2\times [0,1]$, we have an element of $\Diff(M)$ that leaves $M_0(S)$ invariant, interchanges the boundaries of $M_{k+i}(S)$, restricts to an orientation preserving diffeomorphism of $M_{k+i}(S)$ and restricts to the identity on the other factors.
 \item {\it Slide diffeomorphisms}. These diffeomorphisms slide a factor $M_i(S)$ for $i\leq k$ around an arc $\alpha$ in $M$ that intersects $M_i(S)$ only at its endpoints. To be more precise, let $\hat{M}$ be the manifold obtained by gluing a ball $B$ to $M\backslash \text{int}(M_i(S))$ and let $\alpha$ be an arc in $M\backslash \text{int}(M_i(S))$ that intersects $\partial M_i(S)$ at its end points. There is a disk pushing isotopy $h_t$ of $\hat{M}$ where $h_0=\text{id}$ and $h_1|_B=\text{id}$ so that $h_t$ moves $B$ along the arc $\alpha$. A {\it slide diffeomorphism} that slides $M_i(S)$ along $\alpha$ is a diffeomorphism $f\in \Diff(M)$ so that $f|_{M_i(S)}=\text{id}$ and on $M\backslash \text{int}(M_i(S))$, the diffeomorphism $f$ is equal to $h_1$. 
 \end{enumerate}

\begin{defn}
Let $\SDiff(M)$ be subgroup of $\Diff(M)$ that is generated by  slide diffeomorphisms. 
\end{defn}
Note that the ``restrict to boundary'' map from $\SDiff(M)$ also has image equal to $\Diff_0(T)$. Therefore, we have the homotopy commutative diagram 
\begin{equation}
\begin{gathered}
\begin{tikzpicture}[node distance=1.5cm, auto]
  \node (A) {$\BDiff_0(M)$};
  \node (B) [right of=A, below of=A, node distance=1.5cm]{$\BDiff_0(T).$};
  \node (C) [right of=A, node distance=3cm]{$\mathrm{B}\SDiff(M)$};
  \draw [->] (A) to node {$$} (C);
    \draw [<-] (B) to node {$r_s$} (C);
  \draw [->] (A) to node {$r$} (B);
 \end{tikzpicture}
 \end{gathered}
\end{equation}
Hence, to prove \Cref{main}, it is enough to prove the following theorem.
\begin{thm}\label{Main}
Let $M$ be an orientable three-manifold, not diffeomorphic to $D^2 \times S^1$, with $\partial M = T^2$. Then the induced map
\[r_s^*\colon H^2(\BDiff_0(T);\bQ)\to H^2(\mathrm{B}\SDiff(M);\bQ),\]
has a nontrivial kernel.
\end{thm}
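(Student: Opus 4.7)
The plan is to resolve $\mathrm{B}\SDiff(M)$ via the $\SDiff(M)$-action on the weakly contractible semi-simplicial space $|X_\bullet(M)|$ (\Cref{prop1}), and to compare this resolution to the analogous one for the trivial $\Diff_0(T^2)$-action. The Borel construction $|X_\bullet(M)\hcoker\SDiff(M)|\simeq \mathrm{B}\SDiff(M)$ gives a first-quadrant spectral sequence
\[
E_1^{p,q}=\bigoplus_{[\sigma_p]}H^q(\mathrm{B}\mathrm{Stab}_{\SDiff(M)}(\sigma_p);\bQ)\Longrightarrow H^{p+q}(\mathrm{B}\SDiff(M);\bQ),
\]
summed over $\SDiff(M)$-orbits of $p$-simplices. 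For the trivial $\Diff_0(T^2)$-action on $X_\bullet(M)$ the analogous spectral sequence converges to $H^\ast(|X_\bullet(M)|\times\BDiff_0(T^2);\bQ)=H^\ast(\BDiff_0(T^2);\bQ)$, and since $|X_\bullet(M)|$ is contractible its associated graded is concentrated in filtration zero. The restriction $r_s\colon\SDiff(M)\to\Diff_0(T^2)$ induces a map of spectral sequences converging to $r_s^\ast$.

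The main analytical input is to produce a single class $\alpha\in H^2(\BDiff_0(T^2);\bQ)$ whose pullback vanishes in $H^2(\mathrm{B}\mathrm{Stab}_{\SDiff(M)}(\sigma_0);\bQ)$ for every $\SDiff(M)$-orbit of $0$-simplex $\sigma_0$. Cutting $M$ along $\sigma_0$ yields a piece $P$ containing the torus boundary; let $\hat{P}$ be the prime factor of $M$ obtained by capping the sphere components of $\partial P$ with balls. The restriction to $T^2$ factors through $\Diff_0(P,\partial P\setminus T^2)$, the diffeomorphisms of $P$ that are the identity on the sphere boundary components. If $\hat{P}\not\cong D^2\times S^1$ then $\hat{P}$ is irreducible with torus boundary and \Cref{prop:irreducible} applies, so one of the two Euler generators $x_1,x_2$ of $H^2(\BDiff_0(T^2);\bQ)$ pulls back to zero. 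If $\hat{P}\cong D^2\times S^1$, the hypothesis $M\not\cong D^2\times S^1$ forces $P$ to have at least one sphere boundary component; Hatcher's weak equivalence $\Diff_0(D^2\times S^1)\simeq T^2$ together with the evaluation-at-a-point fibration shows that the image of $\pi_1(\Diff_0(D^2\times S^1,\mathrm{pt}))$ in $\pi_1(\Diff_0(T^2))=\bZ^2$ has rank one, and again one of the generators lies in the kernel. Since the prime factor of $M$ carrying $T^2$ is an invariant of $M$, the kernel class $\alpha$ can be chosen uniformly across all orbits, and since $\mathrm{Stab}_{\SDiff(M)}(\sigma_p)$ embeds in $\mathrm{Stab}_{\SDiff(M)}(\sigma_0^{(0)})$ the vanishing persists on every higher stabilizer.

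The last step is to propagate this vanishing to $\mathrm{B}\SDiff(M)$ via the map of spectral sequences. The class $\alpha$ is a permanent cycle in the target and represents itself in $F_\infty^{0,2}=H^2(\BDiff_0(T^2);\bQ)$, so naturality forces the image of $r_s^\ast(\alpha)$ in the source $E_\infty^{0,2}$ to vanish, placing $r_s^\ast(\alpha)$ in $F^1H^2(\mathrm{B}\SDiff(M);\bQ)$. Combining the vanishing of $\alpha$ on every stabilizer with $H^1(\BDiff_0(T^2);\bQ)=0$ and the collapsing of the target spectral sequence rules out any contribution to the remaining filtration layers $E_\infty^{1,1}$ and $E_\infty^{2,0}$, yielding $r_s^\ast(\alpha)=0$. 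The main obstacle is precisely this final convergence argument: one must check that the vanishing on each stabilizer really propagates through every filtration layer of $H^2(\mathrm{B}\SDiff(M);\bQ)$, rather than leaving a residual class in $F^1$ or $F^2$. A secondary delicate point is verifying in case (b) that $\mathrm{Stab}_{\SDiff(M)}(\sigma_0)$ genuinely restricts to $T^2$ through $\Diff_0(D^2\times S^1,\mathrm{pt})$; this uses that slides fixing $\sigma_0$ decompose according to the pieces and that $D^2\times S^1$ is prime and therefore admits no internal slide diffeomorphisms.
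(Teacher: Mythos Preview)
Your overall architecture matches the paper's: resolve $\mathrm{B}\SDiff(M)$ by the contractible $|X_\bullet(M)|$, identify the level-$p$ Borel quotient with $\mathrm{B}\text{Stab}(\sigma_p)$, produce a single generator $\alpha\in H^2(\BDiff_0(T);\bQ)$ that dies on every stabilizer, and then run the skeletal spectral sequence. Your treatment of the ``main analytical input'' (the two cases $\hat P\cong D^2\times S^1$ and $\hat P\not\cong D^2\times S^1$) is essentially the paper's \Cref{prop:20}, and the observation that the kernel class depends only on the prime factor carrying the torus is exactly the remark following that lemma.

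The genuine gap is the last paragraph. Knowing $\beta_p^*(\alpha)=0$ for all $p$ only places $r_s^*(\alpha)$ in $F^1H^2(\mathrm{B}\SDiff(M);\bQ)$; it says nothing about the successive quotients $E_\infty^{1,1}$ and $E_\infty^{2,0}$ of the \emph{source} spectral sequence. Your appeal to ``$H^1(\BDiff_0(T^2);\bQ)=0$ and the collapsing of the target spectral sequence'' is information about the \emph{target}, and naturality of the map of associated gradeds does not push this backward: once $r_s^*(\alpha)$ has dropped into $F^1$ on the source side, it has no well-defined lift in $\mathrm{gr}^1$ of the target to compare with. Concretely, the null-homotopies of $\beta_p^*(\alpha)$ on the various simplices need not glue, and the obstructions to gluing sit exactly in $E_\infty^{1,1}$ and $E_\infty^{2,0}$ of the source.

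The paper closes this gap by proving these groups vanish outright, and both steps are nontrivial $3$-manifold input that your proposal omits. For $E_\infty^{1,1}$ one shows $H^1(\mathrm{B}\text{Stab}(\sigma_p);\bQ)=0$ by proving $\pi_0(\text{Stab}(\sigma_p))$ is finite; this uses that the subgroup $\mathcal R(M)\subset\pi_0(\Diff(M))$ generated by sphere twists is finite normal, and that a slide diffeomorphism stabilizing $\sigma$ becomes trivial in each quotient $\overline{\pi_0(\Diff(M_i(\sigma),\partial))}$. For $E_\infty^{2,0}$ one shows the entire $q=0$ row is acyclic at $E_2$ by identifying it with the chain complex of the semi-simplicial set $K_\bullet(M)$ of $\SDiff(M)$-orbits and proving $|K_\bullet(M)|$ contractible; this in turn rests on the lemma (after Scharlemann and Hatcher--McCullough) that any sphere system can be slid into a fixed punctured $S^3$ component $M_0(S)\subset M$. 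Without these two computations your spectral sequence argument does not conclude, and you yourself flag this (``one must check that the vanishing \ldots really propagates'') without supplying the mechanism.
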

 To prove this theorem, we use the homotopy quotient \footnote{For a topological group $G$ acting on a topological space $X$, the homotopy quotient is denoted by $X\hcoker G$ and is given by $X\times_G \mathrm{E}G$ where $\mathrm{E}G$ is a contractible space on which $G$ acts freely and properly discontinuously.} of the action of $\SDiff(M)$ on $X_{\bullet}(M)$ to define a semisimplicial resolution for $\mathrm{B}\SDiff(M)$. Let the map $\alpha$ be given by
\begin{equation}\label{resolution}
\alpha\colon X_{\bullet}(M)\hcoker \SDiff(M)\xrightarrow{} \mathrm{B}\SDiff(M).
\end{equation}
Since $X_{\bullet}(M)$ is a subspace of a product of embedding spaces, it is compactly generated weak Hausdorff space. Therefore, by \cite[Lemma 2.1]{randal2009resolutions}, the map $\alpha$ is a locally trivial fiber bundle with fibers homeomorphic to the geometric realization $|X_{\bullet}(M)|$. Given that $|X_{\bullet}(M)|$ is contractible by \Cref{prop1}, the map $|\alpha|$ between $|X_{\bullet}(M)\hcoker \SDiff(M)|$ and $\mathrm{B}\SDiff(M)$ is a weak equivalence. 
\subsection{Proof of \Cref{Main}} We have a homotopy commutative diagram 

\begin{equation}\label{311}
\begin{gathered}
\begin{tikzpicture}[node distance=1.5cm, auto]
  \node (A) {$X_{\bullet}(M)\hcoker \SDiff(M)$};
  \node (B) [right of=A, below of=A, node distance=1.5cm]{$\BDiff_0(T).$};
  \node (C) [right of=A, node distance=3cm]{$\mathrm{B}\SDiff(M)$};
  \draw [->] (A) to node {$$} (C);
    \draw [<-] (B) to node {$r_s$} (C);
  \draw [->] (A) to node {$\beta_{\bullet}$} (B);
 \end{tikzpicture}
 \end{gathered}
\end{equation}
Recall  that our goal is to show that there exists a generator $x\in H^2(\BDiff_0(T);\bQ)$ so that $r_s^*(x)=0$. As a first step, we show that for all $p$, the class $\beta_p^*(x)$ vanishes in $H^2(X_{p}(M)\hcoker \SDiff(M);\bQ)$.

\begin{lem}\label{prop:20} Let $M$ be a $3$-manifold that bounds a torus and is not diffeomorphic to $D^2\times S^1$. There exists a generator $x\in H^2(\BDiff_0(T);\bQ)$ such that for each $p$, the class $x$ is in the kernel of the map induced by $\beta_p$
\[
H^2(\BDiff_0(T);\bQ)\to H^2(X_{p}(M)\hcoker \SDiff(M);\bQ).
\]
\end{lem}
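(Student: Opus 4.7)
The plan is to decompose $X_p(M)\hcoker \SDiff(M)$ into $\SDiff(M)$-orbits of $p$-simplices and, on each orbit component, factor the restriction-to-boundary map through the diffeomorphism group of the piece of $M$ containing $T$, reducing to Proposition~\ref{prop:irreducible} in the main case and to a direct spectral sequence computation in one exceptional case.

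First, I would write $X_p(M)\hcoker \SDiff(M)\simeq \bigsqcup_{[\sigma]}\mathrm{B}\,\text{Stab}_{\SDiff(M)}(\sigma)$ with $\sigma=(s_0,\ldots,s_p)$ ranging over orbit representatives, so that $\beta_p$ on each component is induced by $\text{Stab}(\sigma)\to\Diff_0(T)$. Elements of $\text{Stab}(\sigma)$ preserve each $s_i$ setwise and hence preserve the component $Q_\sigma$ of $M\setminus \bigcup_i s_i$ containing $T$, giving a restriction $\text{Stab}(\sigma)\to\Diff(Q_\sigma)$. By uniqueness of prime decomposition, capping the spherical boundary components of $Q_\sigma$ with $3$-balls produces the irreducible prime summand $P_0$ of $M$ containing $T$, and $P_0$ is \emph{independent} of $\sigma$. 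Using contractibility of $\Diff(D^3,\partial D^3)$ (Hatcher), this cap-off is well-defined up to canonical homotopy and yields $\Diff(Q_\sigma)\to\Diff_0(P_0)$. The restriction to $T$ therefore factors as
\[
\text{Stab}(\sigma)\lra \Diff(Q_\sigma)\lra \Diff_0(P_0)\lra \Diff_0(T).
\]

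If $P_0\not\cong D^2\times S^1$, then Proposition~\ref{prop:irreducible} supplies a nonzero $x\in H^2(\BDiff_0(T);\bQ)$ in the kernel of $H^2(\BDiff_0(T);\bQ)\to H^2(\BDiff_0(P_0);\bQ)$, and the factorization above pulls this class back to zero on every component $\mathrm{B}\,\text{Stab}(\sigma)$. Since $P_0$ is independent of $\sigma$ and $p$, the same $x$ works uniformly. In the exceptional case $P_0\cong D^2\times S^1$ (so $M\cong (D^2\times S^1)\# N$ with $N$ a nontrivial closed $3$-manifold), Gabai's theorem gives $\BDiff_0(D^2\times S^1)\simeq \BDiff_0(T)$, so Proposition~\ref{prop:irreducible} produces no kernel at the $P_0$ level. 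Here $Q_\sigma$ is $D^2\times S^1$ with $k\geq 1$ open balls removed, and I would instead analyze the Serre spectral sequence of the ``remove balls'' fibration
\[
\Emb\bigl(\textstyle\bigsqcup_k D^3,\,D^2\times S^1\bigr)\lra \mathrm{B}\Diff(Q_\sigma,\partial^{\mathrm{sph}}Q_\sigma)\lra \mathrm{B}\Diff_0(D^2\times S^1),
\]
where $\partial^{\mathrm{sph}}Q_\sigma$ denotes the spherical boundary components. Because the $S^1$-rotation of $D^2\times S^1$ moves interior balls nontrivially around the $S^1$-direction, the connecting map $\pi_1(\Diff_0(D^2\times S^1))\to \pi_0(\Diff(D^2\times S^1,\bigsqcup_k D^3))$ is nonzero on the $S^1$-factor generator; dually, the transgression $d_2$ hits the class in $H^2(\BDiff_0(D^2\times S^1);\bQ)\cong H^2(\BDiff_0(T);\bQ)$ corresponding to the $S^1$-factor of $T=\partial(D^2\times S^1)$. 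Taking $x$ to be this class furnishes a kernel, uniform across all orbits.

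The main obstacle will be the exceptional case $P_0\cong D^2\times S^1$: one must verify the transgression computation rigorously and identify its image with a specific generator in $H^2(\BDiff_0(T);\bQ)$, uniformly across orbits in which different $Q_\sigma$ have different numbers of balls removed. A secondary care point is the factorization through $\Diff_0(P_0)$ itself: since $\text{Stab}(\sigma)$ need not fix the spheres pointwise, the cap-off must be made group-theoretically coherent, which relies essentially on contractibility of $\Diff(D^3,\partial D^3)$.
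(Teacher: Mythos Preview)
Your proposal is correct and follows essentially the same route as the paper's proof: reduce to $\mathrm{B}\,\text{Stab}(\sigma)$, factor the boundary-restriction through the prime piece $P$ containing $T$, apply Proposition~\ref{prop:irreducible} when $P\not\cong D^2\times S^1$, and handle $P\cong D^2\times S^1$ via the fibration over the embedding space of balls in $P$. For your two flagged care points the paper's execution is slightly slicker and worth recording: the cap-off is realized not by appealing to contractibility of $\Diff(D^3,\partial D^3)$ but by the zig-zag $\Diff_0(P)\hookleftarrow \Diff_0(P\setminus\text{int}(D^3),\partial_{\SO(3)})\xrightarrow{\ \simeq\ }\Diff_0(P\setminus\text{int}(D^3))$ (needing only $\Diff_0(S^2)\simeq\SO(3)$, together with the observation that slide diffeomorphisms stabilizing $\sigma$ restrict to the identity component on each piece), and in the exceptional case your transgression computation is carried out directly as a $\pi_1$ calculation on the long exact sequence of $\Diff(P\setminus\text{int}(D^3),\partial_{\SO(3)})\to\Diff(P)\to\Emb^{\text{fr}}(D^3,P)\simeq D^2\times S^1$, from which one reads off that $\pi_1(\Diff_0(P\setminus\text{int}(D^3)))\to\pi_1(\Diff_0(T))\cong\bZ^2$ misses exactly the $S^1$-direction generator.
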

  
\begin{proof}
First, we shall describe the homotopy type of $X_{p}(M)\hcoker \SDiff(M)$ in terms of stabilizers of the action of $\SDiff(M)$ on $X_p(M)$. For a $p$-simplex $\sigma_p\in X_p(M)$, let $\text{Stab}(\sigma_p)$ be the stabilizer of $\sigma_p$ under the action of $\SDiff(M)$. The isotopy extension theorem implies that we have a fibration 
\[
\text{Stab}(\sigma_p)\to \SDiff(M)\to X_p(M),
\]
where the last map is the evaluation map on $\sigma_p$. In fact the local triviality (see \cite[Remark page 307]{MR0123338}) of the evaluation map implies that $X_p(M)$ is homeomorphic to $\SDiff(M)/\text{Stab}(\sigma_p)$. Therefore, the natural map
\begin{equation}\label{f}
f\colon\mathrm{B}\text{Stab}(\sigma_p)\to X_{p}(M)\hcoker \SDiff(M),
\end{equation}
is a weak equivalence. Thus it is enough to show that there exists a generator $x\in H^2(\BDiff_0(T);\bQ)$ that lies in the kernel of the map 
\[
H^2(\BDiff_0(T);\bQ)\xrightarrow{f^*} H^2(\mathrm{B}\text{Stab}(\sigma_p);\bQ).
\]
Let $M_i(\sigma_p)$ denote the components of the manifold obtained from $M$ by cutting along sphere systems in $\sigma_p$. Note that if a slide diffeomorphism $f$ fixes $M_i(\sigma_p)$ setwise, it will lie in $\Diff_0(M_i(\sigma_p))$ (i.e. its restriction to $M_i(\sigma_p)$ is isotopic to the identity in the group $\Diff(M_i(\sigma_p))$ of diffeomorphisms that preserve, but do not necessarily pointwise fix the boundary). Let $P\backslash \text{int}(D^3)$ be the connected component containing the torus boundary when we cut $M$ along the embedded spheres in the $p$-simplex $\sigma_p$.  We have a homotopy commutative diagram 
\[
\begin{tikzpicture}[node distance=1.5cm, auto]
  \node (A) {$\mathrm{B}\text{Stab}(\sigma_p)$};
  \node (B) [right of=A, below of=A, node distance=1.5cm]{$\BDiff_0(T).$};
  \node (C) [right of=A, node distance=3.3cm]{$\BDiff_0(P\backslash \text{int}(D^3))$};
  \draw [->] (A) to node {$\text{res}$} (C);
    \draw [<-] (B) to node {$g$} (C);
  \draw [->] (A) to node {$f$} (B);
 \end{tikzpicture}
\]
Thus, it is enough to show that $g^*(x)=0$ for a generator $x$. We consider two different cases depending on whether $P$ is diffeomorphic to $D^2\times S^1$. 

\noindent{\bf Case 1}: Suppose $P$ is diffeomorphic to $D^2\times S^1$. Dually, it suffices to show that the map
\[
H_2(\BDiff_0(P\backslash \text{int}(D^3));\bQ)\to H_2(\BDiff_0(T);\bQ),
\]
is not surjective. By the Hurewicz theorem, it is enough to prove  that the map $\pi_1(\Diff_0(P\backslash \text{int}(D^3)))\to \pi_1(\Diff_0(T))$ does not hit both generators of $\bZ^2$. 

Let $\Diff_0(P\backslash \text{int}(D^3), \partial_{\text{SO}(3)})$ be the subgroup of $\Diff_0(P\backslash \text{int}(D^3))$ consisting of those diffeomorphisms that restrict to a rotation on the parametrized sphere boundary. Because $\Diff_0(S^2)\simeq \text{SO}(3)$, the inclusion 
\[
\Diff_0(P\backslash \text{int}(D^3), \partial_{\text{SO}(3)})\xrightarrow{\simeq} \Diff_0(P\backslash \text{int}(D^3)),
\]
is a weak equivalence.

Moreover, the group $\Diff(P\backslash \text{int}(D^3), \partial_{\text{SO}(3)})$ sits in a fiber sequence
\[
\Diff(P\backslash \text{int}(D^3), \partial_{\text{SO}(3)})\to \Diff(P)\to \text{Emb}^{\text{fr}}(D^3, P),
\]
where $\text{Emb}^{\text{fr}}(D^3, P)$ is the space of framed embeddings of a $3$-ball into $P$.  It is homotopy equivalent to $P \cong D^2\times S^1$.  Thus, from the long exact sequence of homotopy groups, we obtain
\[
0\to \pi_1(\Diff_0(P\backslash \text{int}(D^3)))\xrightarrow{\theta} \pi_1(\Diff(D^2\times S^1))\xrightarrow{\alpha} \pi_1(D^2\times S^1).
\]
Note that $\pi_1(\Diff(D^2\times S^1))\cong \bZ^2$ and $\pi_1(D^2\times S^1)\cong \bZ$ and the map $\alpha$ is the projection to the second factor. Therefore, the map $\theta$ does not hit both generators.

\noindent{\bf Case 2}: Suppose $P$ is not diffeomorhic to $D^2\times S^1$.  Since rotations on the sphere $S^2$ can be extended to diffeomorphisms of the $3$-ball, the group $\Diff_0(P\backslash \text{int}(D^3), \partial_{\text{SO}(3)})$ embeds into $\Diff_0(P)$. Therefore from the zig-zag of maps
\[
\Diff_0(P)\hookleftarrow\Diff_0(P\backslash \text{int}(D^3), \partial_{\text{SO}(3)})\xrightarrow{\simeq} \Diff_0(P\backslash \text{int}(D^3)),
\]
 we obtain the  commutative diagram
\[
\begin{tikzpicture}[node distance=1.5cm, auto]
  \node (A) {$H^2(\mathrm{B}\text{Stab}(\sigma_p);\bQ)$};
  \node (B) [right of=A, below of=A, node distance=1.5cm]{$H^2(\BDiff_0(T);\bQ).$};
  \node (C) [right of=A, node distance=3.6cm]{$H^2(\BDiff_0(P);\bQ)$};
  \draw [<-] (A) to node {$$} (C);
    \draw [->] (B) to node {$$} (C);
  \draw [<-] (A) to node {$f^*$} (B);
 \end{tikzpicture}
\]

\Cref{prop:irreducible} now implies that $f^*(x)=0$.
\end{proof}
\begin{rem}
Note that the proof of \Cref{prop:20} also implies that the generator $x$ only depends on the prime factor $P$ that contains the torus boundary component.
\end{rem}
To conclude the proof of \Cref{Main}, we use a spectral sequence argument.  
 Recall that for any semi-simplcial space $Y_{\bullet}$, there is a spectral sequence induced by the skeletal filtration on $|Y_{\bullet}|$
\begin{equation}\label{ssss}
\begin{tikzcd}
E^1_{p,q}(Y_{\bullet})=H^q(Y_{p};\bQ)\arrow[Rightarrow]{r}&H^{p+q}(|Y_{\bullet}|;\bQ),
\end{tikzcd}
\end{equation}
and the first differential $d^1\colon E^1_{p,q}(Y_{\bullet})\to E^1_{p+1,q}(Y_{\bullet})$ is given by the alternating sum of maps induced by the face maps (see \cite{segal1968classifying, ebert2017semi}). 

Since $X_{\bullet}(M)\hcoker \SDiff(M)$ is a semi-simplicial resolution for $\mathrm{B}\SDiff(M)$, the spectral sequence computing the cohomology of $|X_{\bullet}(M)\hcoker\SDiff(M)|$ takes the form
\begin{equation}\label{sss}
\begin{tikzcd}
E^1_{p,q}(X_{\bullet}(M)\hcoker \SDiff(M))=H^q(X_{p}(M)\hcoker \SDiff(M);\bQ)\arrow[Rightarrow]{r}&H^{p+q}(\mathrm{B}\SDiff(M);\bQ).
\end{tikzcd}
\end{equation}
Recall that we want to prove that $r^*(x)=0\in H^2(\mathrm{B}\SDiff(M);\bQ)$ in the diagram \ref{311}.  Denote the filtration on  $H^2(\mathrm{B}\SDiff(M);\bQ)$ in the above spectral sequence by
{\small\[
0\subseteq F_2H^2(\mathrm{B}\SDiff(M))\subseteq F_1H^2(\mathrm{B}\SDiff(M))\subseteq F_0H^2(\mathrm{B}\SDiff(M))=H^2(\mathrm{B}\SDiff(M);\bQ).
\]}
A priori $r^*(x)\in F_0H^2(\mathrm{B}\SDiff(M))$, but since by \Cref{prop:20}, we know $\beta_0^*(x)=0$ (in fact $\beta_p^*(x)=0$ for all $p$), the class $r^*(x)$ lives in the kernel of the natural map
\[
H^2(\mathrm{B}\SDiff(M);\bQ)\to H^2(X_{0}(M)\hcoker \SDiff(M);\bQ).
\]
 Hence $r^*(x)\in F_1H^2(\mathrm{B}\SDiff(M))$. Now we shall prove that the first row in the spectral sequence \ref{sss} vanishes. Therefore, in fact we have $r^*(x)\in F_2H^2(\mathrm{B}\SDiff(M))$.
\begin{lem}\label{firstrow}
The first row of the spectral sequence \ref{sss} vanishes, i.e. for all $p$ we have $H^1(X_{p}(M)\hcoker \SDiff(M);\bQ)=0$.
\end{lem}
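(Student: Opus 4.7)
The plan is to invoke the weak equivalence $f\colon \mathrm{B}\text{Stab}(\sigma_p)\xrightarrow{\simeq} X_p(M)\hcoker \SDiff(M)$ from \eqref{f} to reduce the statement of \Cref{firstrow} to showing $H^1(\mathrm{B}\text{Stab}(\sigma_p);\bQ)=0$. Since $\mathrm{B}\text{Stab}(\sigma_p)$ is path-connected,
\[
H^1(\mathrm{B}\text{Stab}(\sigma_p);\bQ)\cong \mathrm{Hom}\big(\pi_1(\mathrm{B}\text{Stab}(\sigma_p)),\bQ\big)\cong \mathrm{Hom}\big(\pi_0(\text{Stab}(\sigma_p)),\bQ\big),
\]
so it suffices to show that $\pi_0(\text{Stab}(\sigma_p))$ has finite abelianization.

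To carry this out, I would cut $M$ along the spheres appearing in $\sigma_p$ to obtain a disjoint union $M\setminus \sigma_p = P'\sqcup N_1\sqcup \cdots \sqcup N_k$, where $P'$ is the component containing the torus boundary and each $N_i$ has only sphere boundary components. Mimicking the $\partial_{\SO(3)}$-framing trick used in the proof of \Cref{prop:20}, I would construct a continuous restriction homomorphism
\[
\rho\colon \text{Stab}(\sigma_p) \to \Diff_0(P',\partial_{\SO(3)})\times \prod_{i=1}^k \Diff_0(N_i,\partial_{\SO(3)}),
\]
whose image lands in the identity-component factors because each generating slide diffeomorphism of $\SDiff(M)$ arises from a disk-pushing isotopy, and these restrict to ambient isotopies of each piece (up to framing rotations on the sphere boundaries).

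By the argument recalled in \Cref{prop:20}, each factor $\Diff_0(-,\partial_{\SO(3)})$ is weakly equivalent to $\Diff_0(-)$ and is therefore path-connected, so $\pi_0$ of the codomain of $\rho$ vanishes. From the long exact sequence of the associated fibration, $\pi_0(\text{Stab}(\sigma_p))$ is a quotient of $\pi_0(\ker \rho)$. The kernel consists of mapping classes in $\SDiff(M)$ stabilizing $\sigma_p$ whose restriction to each piece is isotopic to the identity through framed-boundary-preserving diffeomorphisms; these are classified by framing data on the spheres of $\sigma_p$, namely a subgroup of the finite product $\prod_{S\subset \sigma_p}\pi_0\Diff(S^2)\cong\prod (\bZ/2)$. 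Thus $\pi_0(\text{Stab}(\sigma_p))$ is finite, rationalizes to zero, and yields $H^1(X_p(M)\hcoker\SDiff(M);\bQ)=0$ as desired.

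The hardest step will be rigorously pinning down $\ker(\rho)$: one must argue that every element of $\text{Stab}(\sigma_p)$ whose restriction to every piece is isotopic to the identity through $\SO(3)$-framed boundary-preserving diffeomorphisms is indeed captured by a finite group of framing choices on the spheres of $\sigma_p$. This should follow from a parametrized isotopy extension argument applied to the embedding $\sigma_p\hookrightarrow M$, combined with the observation that the space of framed embeddings $S^2\hookrightarrow M$ is a principal $\SO(3)$-bundle over $\Emb(S^2,M)$, so that the ``gluing data'' along each sphere forms an $\SO(3)$-torsor with finite $\pi_0$.
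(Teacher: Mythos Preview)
Your reduction to showing that $\pi_0(\text{Stab}(\sigma_p))$ is finite is correct and matches the paper. However, the justification you give for the image of $\rho$ landing in identity components does not work as stated. Slide diffeomorphisms are defined via disk-pushing isotopies of the capped-off manifold $\hat{M}$ (where a ball replaces the factor being slid), \emph{not} of $M$ itself; they are typically not isotopic to the identity in $\Diff(M)$, which is precisely why $\SDiff(M)\supsetneq\Diff_0(M)$. Moreover, the individual slide generators are defined relative to a fixed special sphere system $S$ and need not preserve $\sigma_p$; only certain compositions of them do. So one cannot argue generator-by-generator about how a slide restricts to the pieces $M_i(\sigma_p)$, and nothing in your argument survives passing to such compositions.

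There is a second, related gap: even granting that the image of $\rho$ lies in $\prod_i \Diff_0(M_i(\sigma_p),\partial_{\SO(3)})$, this only says the image sits inside a connected group, not that the image is itself connected. The fibration long exact sequence needs the latter to conclude that $\pi_0(\ker\rho)$ surjects onto $\pi_0(\text{Stab}(\sigma_p))$.

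The paper's proof avoids both issues by invoking results of Hatcher--McCullough. One first quotients $\pi_0(\Diff(M))$ by the finite normal subgroup $\mathcal{R}(M)$ generated by Dehn twists about embedded $2$-spheres. After this quotient, it is a theorem (\cite[Lemma~3.4]{MR1070575}) that the natural map $\prod_i \overline{\pi_0(\Diff(M_i(\sigma),\partial))} \to \overline{\pi_0(\Diff(M))}$ is injective. The image $\overline{\pi_0(\text{Stab}(\sigma))}$ then lands inside this product, and one checks directly that elements of $\text{Stab}(\sigma)$ restrict trivially to each piece modulo sphere twists. Your ``framing data'' group $\prod(\bZ/2)$ is morally the finite group $\mathcal{R}(M)$, but making the argument rigorous requires this external injectivity result rather than an ad hoc fibration/kernel analysis.
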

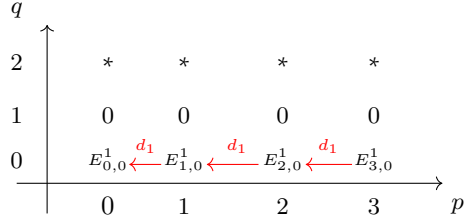
\begin{figure}
\[
\begin{tikzpicture}
\draw [<-] (-0.6,2.7) -- (-0.6,0.2);
\draw [->] (-1,0.6)-- (4.6,0.6);
\node  at (0.2,0.9) {\tiny$E^1_{0,0}$};
\node  at (0.2,0.3) {$0$};
\node at (1.2,0.9) {\tiny$E^1_{1,0}$};
\node at (1.2,0.3) {\small$1$};

\node at (2.5,0.9) {\tiny$E^1_{2,0}$};
\node at (2.5,0.3) {\small$2$};

\node at (3.7,0.9) {\tiny$E^1_{3,0}$};
\node at (3.7,0.3) {\small$3$};



\node at (4.8,0.3) {\small$p$};

\node at (0.2,1.5) {\small$0$};
\node at (-1,1.5) {\small$1$};
\node at (-1,0.9) {\small$0$};

\node at (0.2,2.2) {\small$*$};
\node at (-1,2.2) {\small$2$};



\node at (0.2,5.2) {\small$$};
\node at (-1,2.9) {\small$q$};

\node at (1.2,2.2) {\small$*$};
\node at (2.5,2.2) {\small$*$};
\node at (3.7,2.2) {\small$*$};

\node at (1.2,1.5) {\small$0$};
\node at (2.5,1.5) {\small$0$};
\node at (3.7,1.5) {\small$0$};
\draw [<-,red] (0.48,0.85)--(0.9,.85);
\draw [<-,red] (1.5,0.85)--(2.18,.85);
\draw [<-,red] (2.8,0.85)--(3.41,.85);

\node at (0.7,1.1) {\tiny$\textcolor{red}{d_1}$};
\node at (1.9,1.1) {\tiny$\textcolor{red}{d_1}$};
\node at (3.1,1.1) {\tiny$\textcolor{red}{d_1}$};

\end{tikzpicture}\]
\caption{ The first page of the homology spectral sequence calculating $H_*(|X_{\bullet}(M)\hcoker \SDiff(M)|;\bQ)$}\label{s'}
\end{figure}
\begin{proof}
Using the weak equivalence \ref{f} and the universal coefficient theorem, it is enough to show that for any simplex $\sigma$, we have
\[
H_1(\mathrm{B}\text{Stab}(\sigma);\bQ)=\pi_1(\mathrm{B}\text{Stab}(\sigma))_{\text{ab}}\otimes \bQ=0.
\]
Since $\pi_1(\mathrm{B}\text{Stab}(\sigma))=\pi_0(\text{Stab}(\sigma))$, we shall prove that $\pi_0(\text{Stab}(\sigma))$ is in fact a torsion group.

To do so, we shall freely pass to its finite index subgroups. Similar to \cite[Section 3]{MR1070575}, let $\mathcal{R}(M)$ be the subgroup of $\pi_0(\Diff(M))$ generated by the Dehn twists around embedded $2$-spheres in $M$. By \cite[Lemma 3.2]{MR1070575}, the group $\mathcal{R}(M)$ is a finite normal subgroup. Let $\overline{\pi_0(\Diff(M))}$ denote the quotient
\[
\pi_0(\Diff(M))/\mathcal{R}(M).
\]
For a simplex $\sigma$, let $M_i(\sigma)$ denote the components of the manifold obtained from $M$ by cutting along sphere systems in $\sigma$. By \cite[Lemma 3.4]{MR1070575}, we know that 
\[
\prod_i \overline{\pi_0(\Diff(M_i(\sigma), \partial))}\to \overline{\pi_0(\Diff(M))},
\]
is injective (this can also be seen using \cite[Proposition 2.1]{hatcher2010stabilization}). Hence, $\overline{\pi_0(\text{Stab}(\sigma))}$ is a subgroup of $\prod_i \overline{\pi_0(\Diff(M_i(\sigma), \partial))}$. On the other hand, using the definition of slide diffeomorphisms, one can see that if we restrict  a slide diffeomorphism in $\text{Stab}(\sigma)$ to $M_i(\sigma)$, its image in  $\overline{\pi_0(\Diff(M_i(\sigma), \partial))}$ is trivial. Therefore, $\overline{\pi_0(\text{Stab}(\sigma))}$ is trivial which implies that $\pi_0(\text{Stab}(\sigma))$ is a finite group.
\end{proof}

Since the first row of the spectral sequence is zero, we have  $$r^*(x)\in F_2H^2(\mathrm{B}\SDiff(M))= E^{2,0}_{\infty}(X_{\bullet}(M)\hcoker \SDiff(M))=E^{2,0}_2(X_{\bullet}(M)\hcoker \SDiff(M)).$$ 
Hence to show $r^*(x)=0$ it is enough to prove that $$F_2H^2(\mathrm{B}\SDiff(M);\bQ)=0.$$
To do so, in fact we prove a stronger result  that the $0$-row of this spectral sequence vanishes at $E_2$-page. In other words, the $0$-th row of the $E_1$-page is acyclic.
\begin{lem}\label{0th row}
The cochain complex $(E^{*,0}_1(X_{\bullet}(M)\hcoker \SDiff(M)),d^1)$ is acyclic.
\end{lem}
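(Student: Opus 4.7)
The proof reduces, by dualizing with $\bQ$-coefficients, to showing that the chain complex $(C_\bullet, \partial)$ with $C_p := \bQ[\pi_0(X_p(M) \hcoker \SDiff(M))]$ and $\partial = \sum_{i=0}^p (-1)^i (\partial_i)_*$ the alternating sum of face maps has vanishing homology in positive degrees. Writing $Y_p := \pi_0(X_p(M))/\pi_0(\SDiff(M))$, this amounts to the claim that the geometric realization of the semi-simplicial set $Y_\bullet$ is $\bQ$-acyclic.

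My plan is to exhibit an explicit chain contraction of $C_\bullet$ in positive degrees via an ``insert a basepoint'' operation. Since $M$ is not prime, fix a minimal sphere system $s^* \subset M$ realizing the Kneser--Milnor prime decomposition of $M$ with the fewest possible spheres; any two such minimal sphere systems differ by a sequence of slide diffeomorphisms, so the orbit class $[s^*] \in Y_0$ is canonical. Given any class $[(s_0,\dots,s_p)] \in Y_p$, apply the innermost-circle surgery technique from the proof of \Cref{claim2} to the intersections $s^* \cap \bigcup_i s_i$ to obtain representatives in which $s^*$ is disjoint from each $s_i$. Define the insertion operator
\[
h\colon C_p \longrightarrow C_{p+1},\qquad h([(s_0,\dots,s_p)]) := [(s^*, s_0,\dots,s_p)].
\]

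Granting well-definedness of $h$, the identities $\partial_0 \circ h = \mathrm{id}$ (dropping $s^*$ from the first slot recovers the input) and $\partial_i \circ h = h \circ \partial_{i-1}$ for $i \geq 1$ (face maps past the first slot commute with inserting $s^*$) are immediate from the definitions. Combining these yields the chain homotopy identity $\partial h + h \partial = \mathrm{id}$ on $C_p$ for $p \geq 1$, giving the desired acyclicity in positive degrees.

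The main obstacle is verifying the well-definedness of $h$ on orbit classes. Concretely, two representatives $(s_0,\dots,s_p)$ and $(f \cdot s_0, \dots, f \cdot s_p)$ for $f \in \pi_0(\SDiff(M))$ lead, after restoring disjointness from $s^*$, to tuples $(s^*, s_0,\dots,s_p)$ and $(s^*, f \cdot s_0, \dots, f \cdot s_p)$ that must represent the same class in $Y_{p+1}$. This reduces to showing that the stabilizer of $[s^*]$ in $\pi_0(\SDiff(M))$ acts transitively on the set of orbit-types of sphere system tuples lying in the complement of a fixed representative of $s^*$. Establishing this transitivity is the technical heart of the argument: one uses that the components of $M \setminus s^*$ are prime manifolds with strictly fewer prime factors than $M$, and proceeds by induction on the number of prime factors (with base case supplied by \Cref{prop:irreducible}) together with McCullough's explicit description of slide diffeomorphisms supported in each component.
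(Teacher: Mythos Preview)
Your chain-contraction idea is natural, but the argument has two genuine gaps, both concentrated exactly where you flag the difficulty.

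First, the ``innermost-circle surgery technique from the proof of \Cref{claim2}'' does not produce a representative of the same orbit: that surgery cuts a sphere $S(v)$ along a circle and reglues disks to form new spheres $S'(v), S''(v)$, which are typically not isotopic (let alone $\SDiff(M)$-equivalent) to the original. So after applying it to your $s_i$'s you would no longer have $\partial_0 \circ h = \mathrm{id}$ on orbit classes. The correct tool for making a sphere system disjoint from a fixed special system $S$ \emph{without changing its $\SDiff(M)$-orbit} is \Cref{HM} (Hatcher--McCullough), which moves the system by an actual slide diffeomorphism into $M_0(S)$.

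Second, even after fixing this, your well-definedness claim is not established. You need: whenever $(s_0,\dots,s_p)$ and $(s_0',\dots,s_p')$ are in the same $\SDiff(M)$-orbit and both are disjoint from $s^*$, there exists $g\in\SDiff(M)$ with $g(s^*)=s^*$ and $g(s_i)=s_i'$. Your sketch proposes to prove this by induction on prime factors with base case \Cref{prop:irreducible}, but that proposition concerns the kernel of $H^2(\BDiff_0(T);\bQ)\to H^2(\BDiff_0(P);\bQ)$ and says nothing about transitivity of stabilizers of sphere systems; it is irrelevant here. No substitute for this step is offered, and the statement is not obvious: a slide diffeomorphism taking one tuple to the other exists by hypothesis, but there is no control over what it does to $s^*$.

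The paper avoids this problem entirely. Rather than seek a global extra degeneracy, it shows $|K_\bullet(M)|$ is contractible by the cone-off method: any simplicial map $S^n\to |K_\bullet(M)|$ hits only finitely many orbits $v_1,\dots,v_k$; by \Cref{HM} one chooses representatives of all of them inside $M_0(S)$, hence disjoint from $S$; then a vertex $v$ built from $S$ (with parallel copies adjusted to land in a different orbit) is adjacent to every $v_i$, so the map extends over $S^n * \{v\}$. This only requires a common neighbor for \emph{finitely many} orbits at a time, which is what \Cref{HM} delivers, and entirely bypasses the stabilizer transitivity you would need.
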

Before proving this lemma, we shall describe how to think of the set of the orbits of the action of $\SDiff(M)$ on $X_{\bullet}(M)$.
\subsection{On the orbits of the action of $\SDiff(M)$ on $X_{\bullet}(M)$} We shall prove using a construction essentially due to Scharlemann (see \cite[Appendix A, Lemma A.1]{Bonahon}), that each orbit has a representative inside a submanifold of $M$ that is diffeomorphic to $S^3$ with disjoint balls removed.

Let us fix $S=\coprod S_i$ to be a special sphere system as is defined in \Cref{spheresystems}. We denote the corresponding components of the manifold obtained from $M$ by cutting it along $S$ by $M_i(S)$. Then we have the following lemma (see \cite[Lemma 2.1]{MR1070575}).

\begin{lem}[Hatcher-McCullough] \label{HM}Let $S'$ be any sphere system. Then there exists an element $f$ of $\SDiff(M)$ such that $f(S')\subset M_0(S)$. 
\end{lem}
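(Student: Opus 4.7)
The plan is to construct $f\in\SDiff(M)$ as a composition of finitely many slide diffeomorphisms, in two stages. In the first stage, I will isotope $S'$ off of $S$ by slides, obtaining $f_1\in\SDiff(M)$ with $f_1(S')\cap S=\emptyset$. In the second stage, I will slide each sphere of $f_1(S')$ not already in $M_0(S)$ across the appropriate sphere $S_j$ into $M_0(S)$, obtaining $f=f_2 f_1$ with $f(S')\subset M_0(S)$.

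For the first stage, after perturbing $S'$ to be transverse to $S$ (so that $S'\cap S$ is a finite disjoint union of circles), I would induct on $|S'\cap S|$. If this number is positive, pick a circle $C\subset S'\cap S$ that is innermost on some sphere $S'_\ell\subset S'$: the disk $D\subset S'_\ell$ cobounded by $C$ meets $S$ only along $C$. Then $D$ lies in some component $M_j(S)$, and $C=\partial D\subset S_i$. Among the two sub-disks of $S_i$ cut off by $C$, choose one $D^*$ so that $\Sigma=D\cup D^*$ is an embedded sphere in $M_j(S)$ bounding a ball $B\subset M_j(S)$ disjoint from the caps of $\hat{M}_j$; this is possible because $\hat{M}_j$ is either $S^3$ (if $j=0$), irreducible (if $1\le j\le k$), or trivially $S^2\times[0,1]$ (if $j>k$). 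A slide of the relevant piece along an arc dual to $B$ then pushes $D$ across $\Sigma$ to the opposite side of $S_i$, strictly reducing $|S'\cap S|$; iterating produces $f_1$.

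For the second stage, each sphere of $f_1(S')$ lies in some $M_{j(\ell)}(S)$. If some $j(\ell)\ge 1$, I would argue that the sphere is parallel to a boundary component of $M_{j(\ell)}(S)$: for $1\le j\le k$, irreducibility of $\hat{M}_j$ combined with the essentiality of the sphere in $M$ forces it to bound a ball in $\hat{M}_j$ containing the cap glued to $S_j$, hence to be parallel to $S_j$; for $j>k$, the piece $M_j(S)\cong S^2\times[0,1]$ contains only spheres parallel to its boundary, and after finitely many slides the sphere can be moved through adjacent $S^2\times[0,1]$ pieces so as to sit parallel to a sphere bordering $M_0(S)$. A single slide of $M_{j(\ell)}(S)$ along a short arc puncturing $S_{j(\ell)}$ then carries the sphere into $M_0(S)$, without disturbing the spheres of $f_1(S')$ already in $M_0(S)$, since the support of the slide lies outside them.

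The main obstacle is in Stage 1: verifying that the disk-pushing isotopy across $\Sigma$ can genuinely be realized as a slide in McCullough's strict sense, rather than as an arbitrary ambient isotopy supported in $B$. The crux is a careful choice of sliding arc that runs from the boundary of the piece being slid, through the interior of $B$, and back to a nearby point on the same boundary. This construction is essentially due to Scharlemann and is carried out in \cite[Appendix A, Lemma A.1]{Bonahon}, whose framework I would follow.
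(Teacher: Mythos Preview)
Your two-stage plan is natural but has a genuine gap in Stage~1, precisely in the case you dismiss most quickly: when the innermost disk $D$ on $S'$ lies in $M_0(S)$. You claim one can always choose $D^*\subset S_i$ so that $\Sigma=D\cup D^*$ bounds a ball $B\subset M_0(S)$ ``disjoint from the caps,'' citing $\hat{M}_0\cong S^3$. But $M_0(S)$ has \emph{many} sphere boundary components, and the disk $D$ may separate $M_0(S)$ so that each side contains at least one of the other $S_j$ (equivalently, in $\hat{M}_0\cong S^3$ the two balls bounded by $\Sigma$ each contain a cap point $c_j$ with $j\neq i$). In that situation neither choice of $D^*$ yields a ball contained in $M_0(S)$, and there is no ambient isotopy supported in $M_0(S)$ that pushes $D$ across $S_i$. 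So the inductive step fails here, and this is not a matter of realizing an isotopy as a slide---the isotopy you want simply does not exist.

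This is exactly the case that forces the paper to take a different route. Rather than moving $S'$, the paper leaves $S'$ alone and modifies the \emph{special sphere system} $S$: when $D\subset M_0(S)$, one performs Scharlemann's surgery on $S_i$ along $D$ (cutting $S_i$ into two spheres and re-tubing along an arc parallel to $S'$) to produce a new special sphere system with strictly fewer intersections with $S'$. Iterating yields a special sphere system $S''$ disjoint from $S'$, hence with $S'\subset M_0(S'')$ up to a small isotopy. One then invokes Bonahon's Lemma~A.1, which says that $\SDiff(M)$ acts transitively on special sphere systems, to carry $S''$ to $S$. Your citation of Bonahon is therefore for the wrong step: it is the transitivity statement that is needed at the end, not a tool for realizing a ball-crossing isotopy as a slide. (Incidentally, your worry about whether ambient isotopies lie in $\SDiff(M)$ is not the real issue, since the paper takes $\Diff_0(M)\subset\SDiff(M)$ for granted; the real obstruction is the missing ball in the $j=0$ case.)
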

\begin{proof}[Proof sketch] Bonahon in \cite[Appendix A, Lemma A.1]{Bonahon} showed that the slide diffeomorphisms act transitively on the set of special sphere systems. Therefore, it is enough to show that there is a special sphere system whose $M_0$ contains $S'$. By slightly perturbing $S$, we can assume that the sphere systems $S$ and $S'$ are transverse. From the collection of circles in their intersection, we choose a maximal family of disjoint circles and let $C$ be an {\em innermost} circle in the intersection of $S_i$ and $S'$. Let $D$ be a disk on $S'$ that bounds $C$ and let $D_1$ and $D_2$ be the two disks on $S_i$ that bound $C$. Since $M_i(S)$ is irreducible for $i>0$, if $D$ lies in $M_i(S)$ there will be an isotopy pushing $D$ into $M_0(S)$ to remove the intersection $C$ and possibly others. We eliminate all such intersections. Now suppose $D$ lies in $M_0(S)$.  We use Scharlemann's construction to do surgery on $S_i$ using $D$ to obtain an  embedded sphere $S^*_i$ so that 
\begin{itemize}
\item $(S\backslash S_i) \cup S^*_i$ is a special sphere system.
\item The number of components of $((S\backslash S_i) \cup S^*_i)\cap S'$ is less than the number of components of $S\cap S'$. 
\end{itemize}
We first do surgery on $S_i$ along $D$ to obtain two disjoint spheres $\Sigma_1$ and $\Sigma_2$ that are nearby parallel copies of $D\cup D_1$ and $D\cup D_2$. We shall connect sum these two spheres by a tube around an arc $\alpha$ in $M$ so that $\alpha$ does not intersect $S'$ and $\alpha$ intersects $S$ only at its end points. To choose $\alpha$, note that the components $S'\cap M_i$ are not disks. In a component of $S'\cap M_i$ that is adjacent to $D$, we choose an arc from $\partial D$ to another component $S\cap S'$ and we choose $\alpha$ to be a nearby parallel copy this arc. 

 Let $S^*_i$ be a parametrized embedding of the sphere obtained by connecting sum of $\Sigma_1$ and $\Sigma_2$ along a tube around $\alpha$.   Then one checks that $(S\backslash S_i) \cup S^*_i$ is a sphere system whose intersection with $S'$ has fewer connected components.   See \Cref{pic} for a schematic. By repeating this process, we obtain a special sphere system $S''$ that has no intersection with $S'$.  Therefore, the spheres in $S'$ are either in $M_0(S'')$ or are parallel to sphere boundaries of $M_0(S'')$ which by another isotopy can be moved into $M_0(S'')$. 
\end{proof}

\begin{figure}

\begin{tikzpicture}
\begin{scope}[shift={(-4.1,0)}]
\draw [line width=1.05pt]  (0,0) circle (1cm);
\draw [line width=1.05pt, gray] (1,1) to [out= 195, in=10] (0,0.7);
\draw [line width=1.05pt, gray] (0,0.7) to [out= 190, in=50] (-1.5,1);
\draw [line width=1.05pt, gray] (-1.5,1) to [out= 230, in=110] (-0.4,-0.2);
\draw [line width=1.05pt, gray] (-0.4,-0.2) to [out= -70, in=50] (-0.6,-1.2);
\node at (-1.5, 0) {\small$M_0$};
\node at (0, 0) {\small$M_i$};
\node at (-1, 1.5) {\small$D$};
\node at (1.2, -0.5) {\tiny$S_i$};
\node at (1.2, 1) {\textcolor{gray}{\tiny$S'_i$}};
\end{scope}
\begin{scope}[shift={(0,0)}]
\draw[line width=1.05pt] ($(0,0)+({1*cos(170)},{1*sin(170)})$) arc (170:472:1);

\draw[line width=1.05pt] ($(0,0)+({1*cos(170)},{1*sin(170)})$) to [out= 160, in=230] (-1.6,1.1);
\draw[line width=1.05pt] (-1.6,1.1) to [out= 50, in=160] ($(0,0)+({1*cos(472)},{1*sin(472)})$);

\draw[line width=1.05pt] ($(0,0)+({1*cos(155)},{1*sin(155)})$) to [out= 160, in=230] (-1.4,0.9);
\draw[line width=1.05pt] (-1.4,0.9) to [out= 50, in=160] ($(0,0)+({1*cos(130)},{1*sin(130)})$);

\draw[line width=1.05pt] ($(0,0)+({1*cos(130)},{1*sin(130)})$) arc (130:155:1);
\draw [line width=1.05pt, gray] (1,1) to [out= 195, in=10] (0,0.7);
\draw [line width=1.05pt, gray] (0,0.7) to [out= 190, in=50] (-1.5,1);
\draw [line width=1.05pt, gray] (-1.5,1) to [out= 230, in=110] (-0.4,-0.2);
\draw [line width=1.05pt, gray] (-0.4,-0.2) to [out= -70, in=50] (-0.6,-1.2);
\node at (-1.5, 0) {\small$M_0$};
\node at (0, 0) {\small$M_i$};
\node at (1.2, -0.5) {\tiny$\Sigma_2$};
\node at (-0.5, 0.5) {\tiny$\Sigma_1$};
\node at (1.2, 1) {\textcolor{gray}{\tiny$S'_i$}};
\end{scope}
\begin{scope}[shift={(4.1,0)}]
\draw[line width=1.05pt] ($(0,0)+({1*cos(170)},{1*sin(170)})$) arc (170:260:1);
\draw[line width=1.05pt] ($(0,0)+({1*cos(270)},{1*sin(270)})$) arc (270:472:1);

\draw[line width=1.05pt] ($(0,0)+({1*cos(170)},{1*sin(170)})$) to [out= 160, in=230] (-1.6,1.1);
\draw[line width=1.05pt] (-1.6,1.1) to [out= 50, in=160] ($(0,0)+({1*cos(472)},{1*sin(472)})$);

\draw[line width=1.05pt] ($(0,0)+({1*cos(155)},{1*sin(155)})$) to [out= 160, in=230] (-1.4,0.9);
\draw[line width=1.05pt] (-1.4,0.9) to [out= 50, in=160] ($(0,0)+({1*cos(130)},{1*sin(130)})$);

\draw[line width=1.05pt] ($(0,0)+({1*cos(130)},{1*sin(130)})$) arc (130:138:1);
\draw[line width=1.05pt] ($(0,0)+({1*cos(148)},{1*sin(148)})$) arc (148:155:1);

\draw[line width=1.05pt] ($(0,0)+({1*cos(138)},{1*sin(138)})$) to [out= 320, in=90] ($(0,0)+({1*cos(270)},{1*sin(270)})$);
\draw[line width=1.05pt] ($(0,0)+({1*cos(148)},{1*sin(148)})$) to [out= 330, in=80] ($(0,0)+({1*cos(260)},{1*sin(260)})$);

\draw [line width=1.05pt, gray] (1,1) to [out= 195, in=10] (0,0.7);
\draw [line width=1.05pt, gray] (0,0.7) to [out= 190, in=50] (-1.5,1);
\draw [line width=1.05pt, gray] (-1.5,1) to [out= 230, in=110] (-0.4,-0.2);
\draw [line width=1.05pt, gray] (-0.4,-0.2) to [out= -70, in=50] (-0.6,-1.2);
\node at (-1.5, 0) {\small$M_0$};
\node at (0.5, 0) {\small$M_i$};
\node at (1.2, -0.5) {\tiny$S^*_i$};
\node at (1.2, 1) {\textcolor{gray}{\tiny$S'_i$}};
\end{scope}
\end{tikzpicture}
\caption{Scharlemann's surgery on sphere systems}\label{pic}
\end{figure}
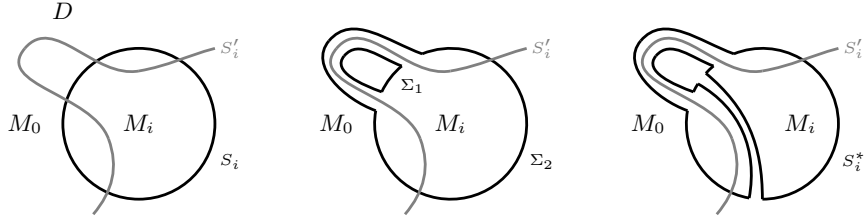
Now we are ready to prove that the $0$-th row of the $E_1$-page of the spectral sequence \ref{sss} is acyclic.

\begin{proof}[Proof of \Cref{0th row}]
Recall from \ref{ssss} that 
\[
E^{*,0}_1(X_{\bullet}(M)\hcoker \SDiff(M))=H^0(X_{\bullet}(M)\hcoker \SDiff(M);\bQ).
\]
Since we work with rational coefficients, it is enough to prove the dual statement that the chain complex
\begin{equation}\label{s''}
(H_0(X_{\bullet}(M)\hcoker \SDiff(M);\bQ), d_1),
\end{equation}
is acyclic. But note that the set of connected components  $\pi_0(X_{\bullet}(M)\hcoker \SDiff(M))$ is isomorphic to the set of orbits of the action of $\pi_0(\SDiff(M))$ on $\pi_0(X_{\bullet}(M))$.  Let us denote the semisimplicial set $\pi_0(X_{\bullet}(M)\hcoker \SDiff(M))$ by $K_{\bullet}(M)$. Then we have 
\[
(H_0(X_{\bullet}(M)\hcoker \SDiff(M);\bQ), d_1)\cong (\bQ[K_{\bullet}(M)], d_1),
\]
where $d_1$ is induced by  the alternate sum of face maps of $K_{\bullet}(M)$. But this chain complex calculates the homology of the realization $|K_{\bullet}(M)|$. Hence, it is enough to show that the realization of the semisimplicial set $K_{\bullet}(M)$ is contractible. Since $K_{\bullet}(M)$ is a set, the realization $|K_{\bullet}(M)|$ has a $\Delta$-complex structure (see \cite[Remark 6.3]{galatius2012stable}). So any map $f: S^n\to |K_{\bullet}(M)|$ can be homotoped to be simplicial for a triangulation of $S^n$. Hence, this map hits finitely many vertices $v_1,v_2,\cdots, v_k$ in $K_0(M)$. So if we show that for any such finitely many vertices, there exists a vertex $v$ in $K_0(M)$  that is adjacent to all $v_i$ then we can extend  $f$ to the join
\[
f*\{ v\}: S^n*\{v\}\to |K_{\bullet}(M)|,
\]
which implies that $f$ is nullhomotopic. Since the union of spheres in the sphere systems of a $p$-simplex for all $p$ constitute a sphere system again, by \Cref{HM}, each orbit has a representative of parametrized spheres in $M_0(S)$. We choose representatives of sphere systems $v_i$ inside $M_0(S)$. Note that these representatives are disjoint from the sphere system $S$. Let $v$ be the sphere system by adding parallel spheres to the sphere system $S$ so that it lies in a different orbit than that of $v_i$'s. We can choose the parallel spheres so that the spheres in $v$ are still disjoint from the representative sphere systems of $v_i$'s. Hence, $v$ is adjacent to $v_i$'s in $  |K_{\bullet}(M)|$. Therefore, $f$ is nullhomotopic. 
\end{proof}
\section{Dynamical obstructions to extending diffeomorphisms}  \label{sec:dynamical}

This section gives an alternative approach to extension problems, using the dynamics of group actions (specifically, fixed sets of finite order elements) to obstruct extensions, with arguments in the style of \cite{Ghys}.  We treat the torus boundary case, followed by the proof for sphere boundary.     

Proposition \ref{prop:irreducible} shows that when $M$ is irreducible and not equal to $D^2 \times S^1$, there is a finitely generated subgroup of $\Diff_0(S^1)\times \{ \mathrm{id}\} \subset \Diff_0(S^1 \times S^1)$ that does not lift to $\Diff_0^1(M)$.  In the next proposition, we show that this is true for all other manifolds with $\partial M \cong T^2$, using a dynamical rather than cohomological approach.  
This gives the following.

\begin{prop} \label{prop:solid_torus}
Let $M$ be a 3-manifold with $\partial M \cong T^2$. 
There is a finitely generated subgroup  $\Gamma \subset \Diff_0(\partial M)$ that does not lift to $\Diff_0^1(M)$.  In fact, we may find such a finitely generated group contained in the subgroup $\Diff_0(S^1)\times \{ \mathrm{id}\} \subset \Diff_0(S^1 \times S^1)$.
\end{prop}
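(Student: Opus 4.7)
The plan is to construct an explicit finitely generated $\Gamma \subset \Diff_0(S^1)$ such that the action $\rho(\gamma) = (\gamma, \mathrm{id})$ on $T^2 \cong \partial M$ admits no extension to $\Diff^1_0(M)$ for any $3$-manifold $M$ with $\partial M \cong T^2$. The argument will be dynamical, in the spirit of \cite{Ghys} and \cite{Mann15}, exploiting the rigidity of $C^1$ finite-order diffeomorphisms; this is why the $C^1$ hypothesis appears in the statement.

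First, I would choose $\Gamma$ to be a finitely generated subgroup of $\mathrm{PSL}(2, \mathbb{R}) \subset \Diff_0(S^1)$ containing non-commuting torsion elements of several different orders. A concrete candidate is a triangle Fuchsian group $\Gamma \cong \Delta(p, q, r) = \langle a, b \mid a^p = b^q = (ab)^r = 1 \rangle$ with $1/p + 1/q + 1/r < 1$, realized so that $a, b, ab$ are elliptic and hence act on $S^1$ as topological rotations of orders $p, q, r$. In particular each of $a, b, ab$ acts freely on $S^1$, so the product action $(\gamma, \mathrm{id})$ acts freely on $T^2$ for every nontrivial torsion $\gamma \in \Gamma$.

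Next, suppose for contradiction that an extension $\phi : \Gamma \to \Diff^1_0(M)$ exists. Then for each torsion $\gamma \in \Gamma$ of order $n$, the lift $\tilde\gamma = \phi(\gamma)$ is a $C^1$-diffeomorphism of $M$ of finite order $n$; since the boundary action is free, $\mathrm{Fix}(\tilde\gamma) \subset \mathrm{int}(M)$. By the Bochner--Montgomery theorem on $C^1$-linearization of finite group actions, $\mathrm{Fix}(\tilde\gamma)$ is a $C^1$-submanifold of $M$. Combined with the facts that $\tilde\gamma$ is orientation-preserving (because $\Diff_0(M)$ is path-connected) and $\dim M = 3$, this forces $\mathrm{Fix}(\tilde\gamma)$ to be (possibly empty and otherwise) a disjoint union of embedded circles in $\mathrm{int}(M)$, along each of which $\tilde\gamma$ acts on the normal $2$-disk as a rotation by $2\pi k / n$ with $\gcd(k, n) = 1$.

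Finally, I would derive a contradiction from the resulting rotation-number data. The local normal rotation numbers along the fixed circles of $\tilde a$, $\tilde b$, and $\tilde a \tilde b$ are constrained both by the group relations in $\Gamma$ --- for instance, $(ab)^r = 1$ forces the normal rotation numbers of $\tilde a \tilde b$ to satisfy an additivity condition mod $1$ with those of $\tilde a, \tilde b$ --- and by the prescribed rotation numbers of $a, b, ab$ acting on the first $S^1$-factor of $T^2$. The parameters $(p, q, r)$ must be chosen so that these compatibility conditions admit no common solution. The main obstacle is exactly this last step: making the rotation-number argument rigorous and uniform over every $M$ with $\partial M \cong T^2$. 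In particular, for $M = D^2 \times S^1$ the cohomological obstruction of \Cref{main} fails, so the contradiction must come from purely local dynamical information --- which is precisely what the $C^1$ hypothesis provides.
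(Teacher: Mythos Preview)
Your overall shape is right --- use torsion in $\Diff_0(S^1)$, lift, study fixed circles via local linearization of finite-order $C^1$ diffeomorphisms --- but the final step as written is not a proof, and the approach you sketch there will not close. The problem is that the fixed loci of $\tilde a$, $\tilde b$, and $\widetilde{ab}$ are, a priori, completely unrelated disjoint unions of circles in $\mathrm{int}(M)$. There is no mechanism that forces them to intersect or to sit over a common locus, so there is no place at which to compare their normal rotation numbers, and hence no ``additivity mod $1$'' constraint linking them. The group relation $(ab)^r=1$ only tells you that $\widetilde{ab}$ has order $r$; it imposes nothing on how the local rotation data of $\tilde a$ and $\tilde b$ combine, because those data live at different points. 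In short, the triangle-group relations do not see each other's fixed sets.

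The paper avoids this by choosing $\Gamma$ so that the torsion element is itself a commutator of elements that centralize (a power of) it. Concretely, one writes an order-$2$ rotation of $S^1$ as $[\bar f,\bar g]$ in $\mathrm{PSL}(2,\bR)$, lifts $\bar f,\bar g$ to a $3$-fold cover to get $f,g\in\Diff_0(S^1)$ with $r:=[f,g]$ of order $6$ and $[r^2,f]=[r^2,g]=\mathrm{id}$. Now one works entirely at $\fix(\phi(r)^2)$: since $f$ and $g$ commute with $r^2$, the derivatives $D\phi(f)$ and $D\phi(g)$ at any point of $\fix(\phi(r)^2)$ lie in the centralizer of the block rotation $\left(\begin{smallmatrix}A&0\\0&1\end{smallmatrix}\right)$ with $A\in\SO(2)$ of order $3$. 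That centralizer is abelian, so $D\phi([f,g])=I$ there, contradicting $D\phi(r)^2=\left(\begin{smallmatrix}A&0\\0&1\end{smallmatrix}\right)\neq I$. This is the missing idea: arrange the relations so that all the derivative information is forced to live at a \emph{single} fixed set. You should also note that nonemptiness of $\fix(\phi(r)^2)$ is not automatic (finite-order diffeomorphisms of $3$-manifolds can act freely); the paper secures this via the equivariant sphere theorem in the reducible case (producing an invariant essential sphere on which $\phi(r)^2$ acts as a rotation with two fixed points) and via a lift to the universal cover when $M\cong D^2\times S^1$. Your proposal does not address this point either.
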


\begin{proof} 
As remarked above, we need only treat the case where $M$ is reducible or where $M = D^2 \times S^1$.
Assume first that $M$ is reducible. Following \cite{Ghys}, we may find elements $f$ and $g$ in $\Diff_0(S^1)$ satisfying the following relations:
\[ 
[f,g]^{6} = \mathrm{id}, \,  [[f,g]^2, f] = [[f,g]^2, g] = \mathrm{id},
\]
by writing an order $2$ rotation of $S^1$ as a commutator of two elements $\bar{f}$ and $\bar{g}$ in $\mathrm{PSL}(2,\mathbb{R}) \subset \Diff_0(S^1))$, and then choosing $f$ and $g$ to be any lifts of  $\bar{f}$ and $\bar{g}$, respectively,  to diffeomorphisms of a 3-fold cover of the circle. Note that this ensures that the commutator $[f,g]$ has order 6.

Identify $f$ and $g$ with diffeomorphisms of $\Diff_0(S^1\times S^1)$ acting trivially on the second $S^1$ factor.   Let $G$ denote the group generated by  $f$ and $g$.  We will now show that $G$ admits no extension to $\Diff_0(M)$.   
Suppose for contradiction that $\phi\colon G \to \Diff_0(M)$ were an extension.  Let $r$ denote the commutator $[f,g]$, so $\phi(r)$ is an order $6$ diffeomorphism of $M$.  We show first that the set of points fixed by $\phi(r)^2$ is nonempty.  Note that $\phi(r)^2$ is finite order and orientation preserving, so its fixed set is either $0$ or $1$-dimensional. 

By the equivariant sphere theorem \cite{Dunwoody}, there exists a reducing system of spheres that is setwise preserved by the finite order diffeomorphism $\phi(r)$, with $\phi(r)$ permuting the spheres in the system.  
Since $\phi(\Gamma)$ preserves the boundary torus, it preserves the sphere bounding the irreducible component with boundary torus.  Since $\phi(r)^2$ has order 3, its action on this invariant sphere is conjugate to a rotation (this is true even for actions on spheres by homeomorphisms, due to a result of Kerekjarto \cite{Constantin_Kolev}) and so it fixes exactly two points on this sphere.  Since $\phi(r)^2$ also preserves the tangent plane to these two points, we conclude that $\fix(\phi(r)^2)$ is 1-dimensional, hence a union of finitely many disjoint circles in $M$.  Finally, since $\phi(f)$ and $\phi(g)$ commute with $\phi(r)^2$, they preserve its fixed set. 

Choose local coordinates on $M$ that identify a tubular neighborhood of $\fix(\phi(r)^2)$ with a disjoint union of copies of $D^2 \times S^1$ on which $\phi(r)^2$ acts by an order 3 rotation of each disk $D^2 \times \{x \}$ about $0$.  
In particular, in these coordinates the derivative of $\phi(r)^2$ at each fixed point is the linear map represented by the block matrix
$\left( \begin{smallmatrix} A & 0 \\ 0 & 1\end{smallmatrix} \right)$ where $A$ is a nontrivial order 3 element of $\mathrm{SO}(2)$.    
Since $f = r^2 fr^{-2}$, the derivative of $\phi(f)$ at a point in $\mathrm{Fix}(\phi(r)^2)$ commutes with $D\phi(r)^2 = \left( \begin{smallmatrix} A & 0 \\ 0 & 1\end{smallmatrix} \right)$.  But the centralizer of this matrix consists of matrices of the form $\left( \begin{smallmatrix} B & 0 \\ 0 & t\end{smallmatrix} \right)$, where $B \in \mathrm{O}(2)$ and $t \in \bR$; an abelian subgroup of $\mathrm{GL}(3,\bR)$.  The same is true for $g$, so we have $D\phi([f,g]) = I$ at any point $x \in \fix(\phi(r)^2)$ contradicting the fact that $D\phi([f,g])^2= \left( \begin{smallmatrix} A & 0 \\ 0 & 1\end{smallmatrix} \right)$.

The proof is similar in the case where $M= D^2 \times S^1$.  
Take $f$ and $g$ exactly as above, let $r = [f,g]$ and suppose again for contradiction that $\phi$ were an extension of the action to $\Diff_0(D^2 \times S^1)$.  Then $\phi(r)$ is an order 6 diffeomorphism of $D^2\times S^1$ preserving (setwise) each circle of the form $S^1 \times \{x\}$.  
We claim that $\phi(r)$ has nonempty fixed set, with $\fix(\phi(r))$ a topological circle.  
One way to see this is to lift the action of $\phi(r)$ to an order 6 diffeomorphism of the universal cover $D^2 \times \mathbb{R}$ rotating each circle $\partial D^2 \times \{x\}$, which we may extend to a diffeomorphism of $\mathbb{R}^3$ acting as a rotation about the $z$-axis outside of $D^2 \times \mathbb{R}$.   Averaging a metric so that $\phi(r)$ and its iterates act by isometries, it must preserve and act as an order 6 rotation on each sphere about $0$, hence has two fixed points on the sphere. The union of these fixed points forms the axis of $\phi(r)$.   
 
We can then follow the argument from the previous case above verbatim, trivializing the unit tangent bundle in a neighborhood of $\fix(\phi(r))$, and thus derive a contradiction.   
\end{proof}

We now prove the second item from Theorem \ref{thm:torus_general}. 
Recall this was the statement that there is a finitely generated group which acts on $T^2$ by isotopically trivial diffeomorphisms such that the action admits no extension to $\Diff_0(M)$ for any $3$-manifold $M$ bounded by $T^2$.  
\begin{proof}
Let $\Gamma$ be the group defined above, and let $\Gamma' = \Gamma \times \Gamma$ be the direct product of $\Gamma$ with itself.  Recall that $\Gamma$ acted on $S^1 \times S^1$ with a trivial action on one of the factors  (that is to say, it was naturally a group of homeomorphisms of $S^1$) so $\Gamma'$ has an obvious product action on $S^1 \times S^1$.  
We will show that this action of $\Gamma'$ does not extend to any 3-manifold $M$ with $\partial M \cong T^2$.  Proposition \ref{prop:solid_torus} shows this when $M$ is reducible or the solid torus; in that case the subgroup $\Gamma \times \{1\}$ does not even extend.  

In the case where $M$ is irreducible, we will appeal to Theorem \ref{main}. Using this, it suffices to show that the pullbacks of  the two Euler classes in $\Diff_0(S^1 \times S^1)$ to $\Gamma'$ are linearly independent in $H^*(\Gamma';\bQ)$.  
To see this, let $\Gamma_{6} :=\langle a_1, b_1, \ldots a_{6}, b_{6} \mid \prod_i [a_i, b_i] \rangle$ be the fundamental group of a genus 6 surface with its standard presentation.  There is a homomorphism $\rho$ from $\Gamma_{6}$ to the group $\Gamma \subset \Diff_0(S^1)$ by sending $a_i$ to the homomorphism $f$, for each $i$, and 
 $b_i$ to $g$.   If $f$ and $g$ are chosen so that $[f,g]$ is a standard rotation by $2\pi/3$, then it is easily verified (for example, this follows from the computation of \cite{Milnor}) that the pullback of the Euler class under $\rho$ pairs with the fundamental class in $H_2(\Sigma_6; \mathbb{Z}) \cong H_2(\Gamma_6; \mathbb{Z})$ to give 1, so in particular the Euler class from $\Diff_0(S^1)$ pulls back nontrivially to $\Gamma$.   Since $\Gamma'$ is a product action of $\Gamma$ on each factor, one may embedd $\Gamma_6$ into $\Gamma \times \{1\} \subset \Gamma'$ as above, or into $\{1\} \times \Gamma$.  Considering the pullback of the Euler classes under each embedding shows that the Euler classes pull back to linearly independent elements in $H^*(\Gamma \times \Gamma;\bQ)$, which was what we needed. 
\end{proof}   

We now treat the case of manifolds with sphere boundary.  

\begin{prop}
Let $M$ be a 3-manifold with $\partial M \cong S^2$.  Then there is no extension $\Diff_0(S^2) \to \Diff^1(M^3)$.  
\end{prop}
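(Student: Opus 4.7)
The plan is to adapt the dynamical argument of Proposition \ref{prop:solid_torus} to the sphere boundary setting.  I would construct elements $f, g \in \Diff_0(S^2)$ satisfying $[f, g]^6 = \mathrm{id}$ and $[[f,g]^2, f] = [[f,g]^2, g] = \mathrm{id}$, and which additionally fix a common point $p \in S^2$ at which $r := [f, g]$ acts as a local rotation of order six.  Given such a pair and any hypothetical extension $\phi\colon \langle f, g\rangle \to \Diff^1_0(M)$, the element $\phi(r)$ has order six and fixes $p \in \partial M$.  Since $\phi(r)^2$ preserves $\partial M$, its derivative at $p$ preserves $T_p \partial M$, on which it restricts to a nontrivial order-three rotation $A$; on the inward normal direction the action is a real cube root of unity, hence $+1$.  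So in appropriate coordinates $D\phi(r)^2(p) = \bigl(\begin{smallmatrix} A & 0 \\ 0 & 1\end{smallmatrix}\bigr)$, and exactly as in the torus case the centralizer of this matrix in $\mathrm{GL}(3, \bR)$ is the abelian group of block-diagonal matrices of the form $\bigl(\begin{smallmatrix} B & 0 \\ 0 & t\end{smallmatrix}\bigr)$ with $B$ centralizing $A$ and $t \in \bR^*$.  Since $\phi(f)$ and $\phi(g)$ commute with $\phi(r)^2$ and fix $p$, their derivatives at $p$ lie in this abelian group, so $D\phi(r)(p) = [D\phi(f)(p), D\phi(g)(p)] = I$, contradicting $D\phi(r)(p)^2 \neq I$.

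The main obstacle is producing the required elements $f, g$.  The difficulty is that $S^2$ admits no nontrivial unbranched covers, so the lifting trick used on the circle in the torus case does not translate directly.  My plan is to use a three-fold \emph{branched} (orbifold) cover.  Fix an order-three rotation $R$ of $S^2$ with fixed points $p, q$, and form the orbifold quotient $\mathcal{O} = S^2/\langle R\rangle$: topologically a sphere with two cone points of order three at $p, q$.  Working downstairs in the identity component of the group of orientation-preserving diffeomorphisms of $S^2$ that fix both $p$ and $q$, I would produce $\bar f, \bar g$ whose commutator $[\bar f, \bar g] = \bar r$ is the order-two rotation about the $pq$-axis.  The existence of such a pair is analogous to writing an order-two rotation of $S^1$ as a single commutator of $\mathrm{PSL}(2,\bR)$-elements in Ghys's original argument, and should follow from perfectness and uniform simplicity properties of identity components of diffeomorphism groups in the tradition of Mather--Thurston--Banyaga--Tsuboi (or from an explicit construction imitating Ghys's computation).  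Then $\bar f, \bar g$ lift through the branched cover to $R$-equivariant diffeomorphisms $f, g$ of $S^2$ fixing $p, q$; the commutator $r = [f, g]$ projects to $\bar r$, so $r^2 \in \langle R\rangle \cong \bZ/3$, and for a suitable choice of lifts one has $r^2 = R \neq \mathrm{id}$.  Consequently $r$ has order exactly six and $r^2$ commutes with $f$ and $g$ by $R$-equivariance, providing the input to the dynamical argument above.

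I expect the most delicate step to be verifying that the chosen lifts produce a commutator of order six rather than order two---equivalently, that the $\bZ/3$-valued obstruction class attached to the central extension $\langle R\rangle \to C_{\Diff_0(S^2)}(R) \to \Diff_0(\mathcal{O})$ is nontrivial on the pair $(\bar f, \bar g)$.  This should follow from a local rotation-number computation at $p$, in the same way that the corresponding obstruction in the torus case picks up the nontrivial Euler class of the circle bundle and forces the lifted commutator on the three-fold cover of $S^1$ to have order exactly six.
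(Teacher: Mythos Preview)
Your plan contains a fatal internal contradiction, and it is precisely the ``most delicate step'' you flagged that collapses --- but in the opposite direction from what you hoped.

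You ask for $f,g\in\Diff_0(S^2)$ fixing a point $p$, with $r=[f,g]$ acting at $p$ as a rotation of order six, and with $[r^2,f]=[r^2,g]=\mathrm{id}$.  These requirements are mutually inconsistent.  Since $f,g$ fix $p$ and commute with $r^2$, their derivatives $Df(p),Dg(p)\in\mathrm{GL}^+(T_pS^2)\cong\mathrm{GL}^+(2,\bR)$ centralize $Dr^2(p)$.  But if $Dr^2(p)$ were a nontrivial order-three rotation, its centralizer in $\mathrm{GL}^+(2,\bR)$ is exactly the abelian group $\{aI+bJ:a^2+b^2\neq 0\}\cong\bC^*$.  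Hence $Dr(p)=[Df(p),Dg(p)]=I$, forcing $Dr^2(p)=I$ --- contradicting the assumption.  So no such pair $f,g$ exists.  In particular, your branched-cover construction must yield $r^2=\mathrm{id}$, not $r^2=R$: the local rotation-number obstruction you planned to invoke is zero, not nonzero, because derivatives at the branch point already live in the abelian centralizer.  The circle case avoids this because $f,g$ act freely on $S^1$, so there is no fixed boundary point at which to run this computation; the fixed points of $\phi(r)^2$ are found only in the interior of $M$.  Note also that this is not merely a defect of the branched-cover construction: the relations themselves force any $f,g$ with a common fixed point into this trap, so no alternative construction meeting your stated requirements can succeed.

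The paper's proof is entirely different in character.  It does not attempt to manufacture a small group with prescribed relations; instead it uses the full group $\Diff_0(S^2)$.  For $f,g\in\SO(2)$ of orders $2$ and $3$, it exploits simplicity of the centralizers $G_f,G_g\subset\Diff_c(S^2\setminus\{n,s\})$ (each isomorphic to compactly supported diffeomorphisms of an open annulus) to force $\phi(G_f),\phi(G_g)$ to act trivially, with trivial derivative, on the fixed sets of $\phi(f),\phi(g)$.  Ghys's lemma that $G_f\cup G_g$ generates $\Diff_c(S^2\setminus\{n,s\})$ then pins down an interval $I\subset M$ on which all of $\phi(\Diff_c(S^2\setminus\{n,s\}))$ acts trivially with trivial derivative.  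An orthogonal order-two rotation $h$ is then factored as a product of an element of its own centralizer $G_h$ and an element of $\Diff_c(S^2\setminus\{n,s\})$, each fixing a point $x_0\in I$ with trivial derivative, contradicting that $\phi(h)$ has order two.  This argument genuinely requires the infinite-dimensional, simple groups $G_f,G_g$ and does not reduce to a finitely-generated statement --- indeed the paper poses as an open problem whether any finitely generated subgroup of $\Diff_0(S^2)$ fails to extend.
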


The proof here is inspired by and adapted from Ghys' proof for $M \cong B^3$.  

\begin{proof}  
For concreteness, parametrize $S^2$ as the unit sphere in $\mathbb{R}^3$.  
Identify $\SO(2)$ with the subgroup of $\Diff(S^2)$ consisting of rotations about the $z$-axis.  Let $n, s$ be the fixed points of these rotations.  
For $r \in \SO(2)$, denote by $G_r$ the centralizer of $r$ in $\Diff_c(S^2 - \{n, s\})$. 

Let $f$ and $g$ in $\SO(2)$ be the rotations of order 2 and 3 respectively. The first tool is a lemma proved by Ghys.  

\begin{lem}[\cite{Ghys}, Lemma 4.4]\label{lem:generate}
$\Diff_c(S^2 - \{n, s\})$ is generated by $G_f \cup G_g$. 
\end{lem}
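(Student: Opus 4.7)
My plan follows a standard fragmentation-plus-symmetrization strategy for this type of generation result. Since $S^2\setminus\{n,s\}$ is diffeomorphic to the open annulus $S^1\times\bR$, on which $f$ acts as $(\theta,t)\mapsto(\theta+\pi,t)$ and $g$ as $(\theta,t)\mapsto(\theta+2\pi/3,t)$. The first step is to apply the standard fragmentation lemma for diffeomorphism groups of open manifolds: every $\phi\in\Diff_c(S^2\setminus\{n,s\})$ factors as a finite product $\phi_1\cdots\phi_k$, with each $\phi_i$ supported in a topological disk $U_i$ that may be taken arbitrarily small. Thus it suffices to show that $\Diff_c(U)\subset\langle G_f,G_g\rangle$ for each small enough disk $U$. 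By shrinking $U$, I may assume $U$ and $f(U)$ are disjoint, and that $U,\,g(U),\,g^2(U)$ are pairwise disjoint.

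The second step is the symmetrization construction. Given $\psi$ supported in such a disk $U$, define the \emph{$f$-symmetrization} $\psi_f$ by setting it equal to $\psi$ on $U$, to $f\psi f^{-1}$ on $f(U)$, and to the identity elsewhere. Because $f$ has order $2$ and swaps the two disjoint patches, one checks directly that $\psi_f\in G_f$. Since the patches are disjoint, $\psi$ and $f\psi f^{-1}$ commute, so $\psi_f=\psi\cdot(f\psi f^{-1})$, giving the identity
\[
\psi \;=\; \psi_f\cdot\rho, \qquad \rho=f\psi^{-1}f^{-1},
\]
where $\rho$ is supported in $f(U)$. One may then apply the entirely analogous $g$-symmetrization to $\rho$, writing $\rho = \eta\cdot\rho'$ with $\eta\in G_g$ and $\rho'$ supported in $gf(U)\cup g^2 f(U)$.

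The main obstacle is to close the argument: naive iteration only shuffles the support of the ``remainder'' around its $\langle f,g\rangle$-orbit on the meridian circle, which is dense in $\SO(2)$ and does not diminish. To force termination I would exploit the full richness of $G_f$ and $G_g$, which contain not only the finite rotations but all of $\SO(2)$ together with many diffeomorphisms that translate along the $\bR$-factor of the annulus while respecting the appropriate rotational symmetry. Concretely, from an $f$-equivariant ``vertical push'' in $G_f$ one can produce a conjugating element that sends the support of the remainder $\rho'$ into a small disk $V$ whose $f$- and $g$-translates are simultaneously disjoint from one another and from $U$. A single further symmetrization over $f$ (or $g$) then absorbs the conjugated $\rho'$ into $G_f$ (resp.~$G_g$), and back-conjugation finishes the decomposition of $\psi$.

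The hard part of the plan is precisely this last step: producing the vertical-push element in $G_f$ or $G_g$ that relocates the support far enough to permit a terminating symmetrization, and verifying it lies in the claimed centralizer. Once that is done, combining with fragmentation recovers all of $\Diff_c(S^2\setminus\{n,s\})$ from $G_f\cup G_g$. An alternative to the explicit closing argument would be to observe that the subgroup $\langle G_f,G_g\rangle$ is normalized by $\SO(2)\subset G_f\cap G_g$ and contains many commutators, and then invoke the simplicity of $\Diff_c^0$ of the annulus to conclude; this avoids the explicit geometric construction but hides all the work in an appeal to a deep simplicity theorem.
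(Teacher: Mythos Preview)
The paper does not prove this lemma; it is quoted from Ghys \cite{Ghys} and used as a black box in the proof of Theorem~\ref{thm:sphere}. There is therefore no argument in the present paper against which to compare your attempt.

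Taken on its own, your outline has the right opening moves (fragmentation, then symmetrization over $f$ and over $g$), and you correctly identify termination as the crux. But the closing you describe does not actually terminate. After two rounds you hold a remainder $\rho'$ supported in $gf(U)\cup g^{2}f(U)$. Conjugating by an element of $G_f$ can indeed relocate this support---one can even push the two patches so that they become $f$-translates of one another---but the result is still just a compactly supported diffeomorphism of one or two small disks, which is precisely the situation you began with for $\psi$. One further symmetrization then produces a new two-patch remainder, and you are back in the same loop. Nothing in your scheme forces the successive remainders to become simpler in any measurable sense; you have replaced $\psi$ by a conjugate-and-product of the same type, not by something closer to the identity.

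Your simplicity alternative is also not correctly set up. That $\langle G_f,G_g\rangle$ is normalized by $\SO(2)$ is automatic, since $\SO(2)\subset G_f\cap G_g$, and it gives nothing. To invoke simplicity of the identity component $\Diff_c^{0}$ of the annulus you would need $\langle G_f,G_g\rangle\cap\Diff_c^{0}$ to be \emph{normal in $\Diff_c^{0}$}, and that is essentially equivalent to the statement you are trying to prove. If you want to see how the argument is actually closed, you will need to consult Ghys's original paper; the missing idea is a direct algebraic identity expressing a small-disk diffeomorphism as a product of centralizer elements, not merely a relocation of its support.
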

Now suppose that $M$ is a 3-manifold with $\partial M = S^2$.  Suppose that we have an extension $\phi\colon \Diff_0(S^2) \to \Diff_0(M^3)$.  We will ultimately derive a contradiction by finding a finite order element $h \in \Diff(S^2)$ such that $\phi(h)$ has a fixed point at which its derivative is the identity, contradicting that $\phi(h)$ must be nontrivial and finite order. 

First we study the fixed set of $\phi(f)$.  This is a one dimensional manifold with boundary embedded as a submanifold of $M$.  
As $G_f$ commutes with $f$, $\phi(G_f)$ preserves $\fix(\phi(f))$, so there is a homomorphism $G_f \to \Homeo(\fix(\phi(f)))$.  Since $G_f$ is isomorphic to the group of compactly supported homeomorphisms of an open annulus, by \cite{Mann15}, this homomorphism must be trivial. Moreover, at each point $x \in \fix(\phi(f))$ we have a homomorphism of $G_f$ to $\GL(3,\mathbb{R})$ by taking derivatives.  Since $G_f$ is a simple group (this is a deep result following from \cite{thurston1974foliations}), this homomorphism is trivial.  
The same reasoning applies to show that $\phi(G_g)$ acts trivially on $\fix(\phi(g))$, with trivial derivatives.  

Since $\fix(\phi(f))$ is an embedded 1-manifold with boundary in $M$, and $\fix(\phi(f)) \cap \partial M = \{n, s\}$, there is a unique connected component of $\fix(\phi(f))$ that is diffeomorphic to a closed interval.  Let $I$ denote this interval; its endpoints are $n$ and $s$.
The same reasoning applies to $\phi(g)$, and since $g$ and $f$ commute, $\phi(g)$ preserves $I$ so $I$ must be equal to the interval component of $\fix(\phi(g))$ as well.  Thus, our reasoning above, combined with Lemma \ref{lem:generate} implies that for every point $x \in I$, $\phi(\Diff_c(S^2 - \{n, s\})$ fixes $x$ and has trivial derivatives. 

Let $h$ be an order 2 diffeomorphism that is a rigid rotation commuting with $f$ but rotating about the orthogonal $y$-axis.  Let $e$ and $w$ be the fixed points of $h$.  Since $\phi(h)$ preserves $\fix(\phi(f))$ and exchanges $n$ and $s$, it follows that $\phi(h)$ acts on $I$ as an orientation reversing diffeomorphism, with a unique fixed point.  Let $x_0$ denote this fixed point.   Extending the use of our previous notation, let $G_h$  denote the centralizer of $h$ in $\Diff_c(S^2 - \{e, w\})$.  Then by our argument above, $G_h$ fixes $x_0$ and has trivial derivatives there.   

Finally, let $s_1 \in G_h$ agree with $h$ on the annulus $\{(x,y,z) \in S^2 \mid y \in [-1/2, 1/2]\}$ and act as a rotation on each circle $y = c$, smoothly interpolating between the order 2 rotation on $y \in [-1/2, 1/2]$ and the identity on neighborhoods of $y=-1$ and $y = 1$.   Then $s_1^{-1} h \in \Diff_c(S^2 - \{n, s\})$.  Thus, $\phi(h) = \phi(s_1) \circ \phi(s_1^{-1} h)$ fixes $x_0$ with trivial derivative, giving the desired contradiction.  
\end{proof}

This proof, much like Ghys' proof for $M = D^3$, uses simplicity of the group of compactly supported diffeomorphisms of an open disk.  This is not known for many other natural groups of diffeomorphisms, for instance the group of real analytic diffeomorphisms.  It would be interesting to know whether this group similarly fails to admit an extension.   Restricting to smaller (e.g. finitely generated) subgroups, as in the following problem, makes the section problem even more challenging.   
\begin{problem} Find a finitely generated group $\Gamma \subset \Diff_0(S^2)$ with no extension to $D^3$.  More generally, does there exist a finitely generated group $\Gamma \subset \Diff_0(S^2)$ such that its action on $S^2$ is not nullbordant, in the sense of Definition \ref{bordism def}? 
\end{problem}


\section{Cohomological obstructions for manifolds with sphere boundary}  \label{sec:p_1}

In this section we address the following case of our general program to find cohomological obstructions to extension.  
\begin{problem}\label{quest:p_1}
For which $3$-manifolds $M$ where $\partial M \cong S^2$,
 is the image of the first Pontryagin class $p_1$ under the map $H^4(\mathrm{BSO}(3);\bQ)\to H^4(\BDiff_0(M);\bQ)$ zero?
\end{problem}

Suppose that $M$ is a $3$-manifold whose boundary $\partial M$ is diffeomorphic to $S^2$.  Smale's theorem \cite{MR0112149}, states that $\Diff_0(S^2)\simeq \mathrm{SO}(3)$.  Thus, if we require extensions to be {\em continuous}, we can prove no such extension exists simply by showing that  the map
\[
\text{res}^*\colon H^*(\mathrm{BSO}(3);\bQ)\to H^*(\BDiff_0(M);\bQ),
\]
that is induced by restriction to the boundary, has a nontrivial kernel. 

Requiring continuity was not in the original spirit of Ghys' question -- he asks this from a purely algebraic perspective -- but 
recent automatic continuity results imply that, in many cases, all extensions of the action of the full group $\Diff_0(\partial M)$ are necessarily continuous.   These were used in \cite{ChenMann} to give a negative general answer to the original question as phrased in the introduction, using a completely different approach to that here.   Given this, it would be very interesting to know in which cases such obstructions to extension are also cohomological in nature, and whether there are smaller topological subgroups (i.e. proper subgroups of $\Diff_0(\partial M)$) which fail to extend.  For a prime $3$-manifold $M$, the homotopy type of $\Diff_0(M)$ as a topological group is very well studied. We use this knowledge to find cases that exhibit cohomological obstructions for continuous group extenstions and hence by the following automatic continuity results for all group extensions.

For smooth diffeomorphisms, continuity follows from a result of Hurtado. 
\begin{thm}[Hurtado \cite{Hurtado}]
Let $M$ and $N$ be closed smooth manifolds.  Then any homomorphism $\Diff_0(M) \to \Diff_0(N)$ is continuous. 
\end{thm}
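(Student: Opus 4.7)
The plan is to prove sequential continuity of $\phi$ at the identity: if $f_n \to \mathrm{id}$ in the $C^\infty$-topology on $\Diff_0(M)$, then $\phi(f_n) \to \mathrm{id}$ in $\Diff_0(N)$. Translation by group elements then upgrades this to continuity everywhere. The first step I would take is to apply the fragmentation lemma for diffeomorphism groups: any $f_n$ sufficiently $C^\infty$-close to the identity can be written as a bounded-length product of diffeomorphisms, each supported in an arbitrarily small embedded ball in $M$. This reduces the problem to controlling $\phi$ on sequences $(f_n)$ whose supports $B_n \subset M$ shrink to a point.

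The central device is that $\phi$ preserves commutation: if $f, g \in \Diff_0(M)$ have disjoint supports then $[f,g] = \mathrm{id}$, hence $[\phi(f), \phi(g)] = \mathrm{id}$ in $\Diff_0(N)$. This gives a group-theoretic handle on the notion of support. Using this, the key technical lemma to establish is that as the balls $B_n$ shrink in $M$, the support of $\phi(f_n)$ must also shrink in $N$, because it must be contained in the complement of the fixed sets of all $\phi(g_\alpha)$ for $g_\alpha$ ranging over a sufficiently rich commuting family in $\Diff_0(M)$ whose supports cover $M \setminus B_n$. Varying $\alpha$ forces $\mathrm{supp}\,\phi(f_n)$ into a shrinking intersection.

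The main obstacle, and the heart of the argument, is to make this support-shrinking principle genuinely quantitative and to exclude pathologies: a priori $\phi(f_n)$ could have support accumulating on a set with interior in $N$, or on a wildly embedded topological object. Ruling this out requires exploiting both the abundant local structure of $\Diff_0(M)$ (the existence of smooth flows and fragmentations realizing prescribed supports) and the simplicity/perfectness of $\Diff_0(N)$ from Mather--Thurston; together these forbid $\phi$ from taking values in any proper normal subgroup or pointwise stabilizer, and they ensure that any limit of $\phi(f_n)$ with empty support must already be the identity.

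Once support-shrinkage is secured, the proof concludes by a local rigidity argument. An element of $\Diff_0(N)$ whose support is pinched into an arbitrarily small neighborhood is either trivial or has constrained derivatives at its fixed points, and by iterating the fragmentation of the first step and applying this local control to each piece one obtains $\phi(f_n) \to \mathrm{id}$. The most delicate piece to execute carefully will be the support-shrinking lemma; the rest is comparatively formal once that is in hand.
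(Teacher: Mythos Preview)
The paper does not give a proof of this statement; it is quoted as a theorem of Hurtado and used as a black box. So there is no ``paper's own proof'' to compare against. That said, your outline does track the large-scale architecture of Hurtado's actual argument: reduce to sequential continuity at the identity, fragment into pieces with shrinking support, and exploit the fact that $\phi$ preserves commutation to constrain the supports of the images.

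Where your sketch falls short of a proof is precisely at the step you flag as most delicate, and also at the step after it. First, the commutation argument you describe shows only that $\mathrm{supp}\,\phi(f_n)$ is contained in the intersection of certain non-fixed-point sets; it does not by itself force that intersection to be small, and one must rule out the possibility that the images move some fixed ball in $N$ off itself along a subsequence. Second, even once supports shrink, having arbitrarily small support does \emph{not} imply $C^\infty$-convergence to the identity: a diffeomorphism supported in a tiny ball can still have enormous derivatives. Your ``local rigidity'' paragraph does not supply a mechanism for this.

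In Hurtado's proof both of these gaps are closed using distortion in diffeomorphism groups, specifically Militon's theorem: a sequence converging to the identity sufficiently fast in $\Diff_0(M)$ can be realized as words of slowly growing length in a fixed finite set. Pushing this through $\phi$ forces the images to be distorted in $\Diff_0(N)$, and a separate argument shows that distorted elements cannot displace a ball off itself and must have controlled derivatives. None of this appears in your proposal, and without it the argument does not close. Invoking simplicity or perfectness from Mather--Thurston is not a substitute; those results constrain normal subgroups, not the $C^\infty$ size of individual elements.
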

To apply this in our situation, let $N$ be the double of $M$ along the boundary, and note that any extension $\Diff_0(\partial M) \to \Diff(M)$ induces a homomorphism $\Diff_0(\partial M) \to \Homeo_0(N)$ by doubling.  However, a smoothing trick (see \cite{Parkhe}) permits one to conjugate the extension of the action in such a way that the gluing becomes smooth at the boundary, producing a homomorphism $\Diff_0(\partial M) \to \Diff_0(N)$; which by Hurtado's theorem must be continuous.  It follows that the extension must be continuous.  
The situation is similar for homeomorphisms (and one does not even need to make a gluing argument) due to work of the first author.  
\begin{thm}[\cite{Mann_automatic}]
Let $M$ be a compact manifold, and $G$ any separable topological group. Then any homomorphism $\Homeo_0(M) \to G$ is continuous.  
\end{thm}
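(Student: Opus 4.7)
The plan is to establish the so-called \emph{Steinhaus property} for $\Homeo_0(M)$: there exists an integer $k = k(M)$ such that for every countable cover $\Homeo_0(M) = \bigcup_{n \ge 1} W_n$ by symmetric sets $W_n = W_n^{-1}$, some $W_n^k$ contains a $C^0$-neighborhood of the identity. Granted this, automatic continuity for any homomorphism $\phi : \Homeo_0(M) \to G$ into a separable topological group $G$ follows by a standard trick: choosing a countable neighborhood basis $(V_n)$ of $e \in G$ and setting $W_n := \phi^{-1}(V_n \cap V_n^{-1})$, the Steinhaus property produces some $n$ for which $W_n^k \supset U$, a neighborhood of the identity of $\Homeo_0(M)$; pushing forward gives $\phi(U) \subset V_n^k$. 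Since $V_n^k$ can be made arbitrarily small by choosing $V_n$ small in the basis, $\phi$ is continuous at the identity, hence everywhere by left translation.

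The real work is to prove the Steinhaus property. My plan would follow the strategy of combining $C^0$ fragmentation with a Baire-category argument on a tree of balls. First, by fragmentation, every element of $\Homeo_0(M)$ sufficiently close to the identity is a bounded product of homeomorphisms each supported in a small coordinate ball. Next, I would construct an infinite binary tree of topological balls $(B_\sigma)$ inside $M$ indexed by finite binary strings, where the two children balls of $B_\sigma$ are disjoint and both contained in $B_\sigma$. The symmetric sets $W_n$ induce countable covers of each $\Homeo_0(B_\sigma)$, and a diagonal/pigeonhole argument along the infinite branches of the tree produces a single index $n$ such that $W_n$ contains rich classes of homeomorphisms supported in every ball of some infinite branch.

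From this supply of elements of $W_n$ supported in a nested family of balls, one assembles arbitrary small homeomorphisms via commutators: disjoint-support homeomorphisms commute, so commutators $[f,g]$ with $f,g \in W_n$ can be arranged to realize prescribed supports. Combined with the uniform perfectness of $\Homeo_0$ of a ball (in the spirit of the Anderson--Mather--Epstein--Fisher simplicity theorems, and analogous in flavor to the use of simplicity in Ghys' argument adapted in \Cref{sec:dynamical}), one concludes that $W_n^k$, for an absolute constant $k$, contains a full $C^0$-neighborhood of the identity, completing the Steinhaus argument.

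The main obstacle is the combinatorial coordination in the tree argument: individually, for each ball, it is easy to find \emph{some} $W_n$ controlling homeomorphisms supported there, but the commutator/perfectness factorization requires one fixed $n$ that simultaneously works for all the balls appearing in the factorization of a typical nearby homeomorphism. Organizing the nested tree of balls so the Baire-style diagonal produces such a uniform $n$, while keeping $k$ independent of the particular cover $\{W_n\}$, is the delicate step; this is the technical heart distinguishing automatic continuity from softer statements such as the nonexistence of dense proper normal subgroups.
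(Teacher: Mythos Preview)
The paper does not prove this theorem; it is simply quoted from \cite{Mann_automatic} and applied as a black box, so there is no proof in the paper to compare your attempt against.

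That said, your sketch follows the strategy of the cited reference: deduce automatic continuity from a Steinhaus-type property, and establish that property for $\Homeo_0(M)$ via Edwards--Kirby fragmentation, a nested binary tree of balls, and commutator tricks exploiting disjoint supports together with uniform perfectness. Your ``main obstacle'' paragraph correctly locates where the genuine work lies. One caveat on the easy direction: your reduction from the Steinhaus property to continuity is not right as written. You assume a countable neighborhood \emph{basis} $(V_n)$ at $e\in G$, but $G$ is only assumed separable, not first countable; and even granting first countability, the sets $W_n=\phi^{-1}(V_n\cap V_n^{-1})$ need not cover $\Homeo_0(M)$, so your hypothesis is never triggered. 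The standard reduction fixes a single symmetric open $V\ni e$, uses a countable dense set $\{g_n\}\subset G$ to cover $G$ by $\{g_nV\}$, notes that each nonempty $\phi^{-1}(g_nV)$ is contained in a left translate of $W:=\phi^{-1}(V^2)$, and then applies the Steinhaus property to this one symmetric set $W$. Correspondingly, the form of the Steinhaus property you actually want is the Rosendal--Solecki one (countably many left translates of a single symmetric $W$ cover the group $\Rightarrow$ $W^k$ is a neighborhood of the identity), rather than the ``countable cover by distinct symmetric $W_n$'' version you stated.
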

\noindent Since homeomorphism groups of compact manifolds are separable, this shows any extension is necessarily continuous.  Interestingly, the case of continuity between maps of $C^r$ diffeomorphisms of manifolds, for $0 < r < \infty$, remains open.

We now give some sample results where Question \ref{quest:p_1} can be answered with existing machinery.  
For this, it is easier to work with a marked point instead of the sphere boundary. To change the map 
\[
\text{res}^*\colon H^*(\mathrm{BSO}(3);\bQ)\to H^*(\BDiff_0(M);\bQ),
\]
to the derivative map at a marked point, we first recall the following low dimensional fact.
\begin{lem}\label{lem:2}
For a closed $3$-manifold $P$, the group $\Diff_0(P\backslash D^3)$ has the same homotopy type as $\Diff_0(P\backslash\text{int}(D^3))$.
\end{lem}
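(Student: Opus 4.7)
The plan is to set $M := P \setminus \mathrm{int}(D^3)$, a compact $3$-manifold with boundary $\partial M \cong S^2$ whose interior is canonically diffeomorphic to $P \setminus D^3$, and to show that the natural restriction-to-interior map
\[
r \colon \Diff_0(M) \longrightarrow \Diff_0(\mathrm{int}(M)) = \Diff_0(P \setminus D^3)
\]
is a weak homotopy equivalence. This map is a continuous group homomorphism, and injectivity (at the level of point-sets, and hence on all $\pi_n$) is immediate from the fact that $\mathrm{int}(M)$ is dense in $M$, so a diffeomorphism of $M$ is determined by its restriction.

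The main content is then to invert $r$ up to homotopy. The key tool is a smooth collar $c\colon \partial M \times [0,1) \hookrightarrow M$ with $c(x,0)=x$, combined with a smooth bump function $\beta\colon [0,1)\to [0,1]$ vanishing identically near $0$ and equal to $1$ outside a small neighborhood of $0$. Given a parametrized family $\{\psi_t\}_{t\in S^n}$ in $\Diff_0(\mathrm{int}(M))$, the plan is to modify each $\psi_t$ inside the collar so that the result is the identity on $c(\partial M \times [0,\varepsilon))$ for some small $\varepsilon>0$, and hence extends smoothly across $\partial M$ by the identity. Concretely, one uses $\beta$ to interpolate, in the collar coordinates, between $\psi_t$ (away from $\partial M$) and the identity (near $\partial M$). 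Writing $\widetilde{\psi}_t$ for the resulting diffeomorphism of $M$, the assignment $t \mapsto \widetilde{\psi}_t$ provides a candidate right-inverse to $r$, and the straight-line homotopy through the collar relates $r(\widetilde{\psi}_t)$ back to $\psi_t$ in $\Diff_0(\mathrm{int}(M))$.

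The main obstacle is ensuring that this damping construction genuinely lands in $\Diff_0(M)$ — i.e.\ that the interpolated map is a smooth diffeomorphism and varies smoothly in $t$. Here one uses the standard fact that the space of collars of $\partial M$ is contractible (Brown's uniqueness-of-collars theorem) together with a careful smoothing of the interpolation; modulo these standard technicalities, one obtains a well-defined homotopy inverse to $r$, completing the proof. Because $\mathrm{int}(M)$ and $M$ differ only by the collar (which deformation retracts onto its end), these issues are strictly local to $c(\partial M \times [0,1))$ and amount to showing that the forgetful map from diffeomorphisms of a compact manifold with boundary to diffeomorphisms of its interior is an equivalence — a routine consequence of collar uniqueness.
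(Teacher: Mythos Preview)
There is a real gap in the damping step. A diffeomorphism $\psi_t$ of $\mathrm{int}(M)$ need not carry the collar into itself, so ``interpolate in collar coordinates between $\psi_t$ and the identity'' is not a well-defined operation: for $p$ in the collar, $\psi_t(p)$ may lie entirely outside the collar, and there are no coordinates in which to interpolate. Brown's collar-uniqueness concerns contractibility of the space of embeddings $\partial M\times[0,1)\hookrightarrow M$ extending the boundary inclusion; it does not control how an arbitrary diffeomorphism of the \emph{open} manifold $\mathrm{int}(M)$ moves a neighborhood of the end, and it does not produce the homotopy inverse you describe.

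In fact the general principle you appeal to --- that restriction $\Diff(M)\to\Diff(\mathrm{int}(M))$ is a weak equivalence for any compact manifold with boundary --- is not routine and fails in high dimensions. For $M=D^n$, linearization at the origin gives $\Diff(\bR^n)\simeq O(n)$, while the same trick splits $\Diff(D^n)\simeq O(n)\times\Diff(D^n;\,0,\,D\psi_0=I)$; after straightening near $0$ the second factor is the pseudoisotopy space of $S^{n-1}$, which is not contractible for large $n$. So some genuinely $3$-dimensional input is required, and your argument has none.

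The paper supplies that input by routing through homeomorphisms. Cerf--Hatcher gives that $\Diff\hookrightarrow\Homeo$ is a weak equivalence for $3$-manifolds, and on the $\Homeo$ side the comparison is easy: $P\setminus D^3\cong P\setminus\{x\}$ as open manifolds, and every homeomorphism of $P\setminus\{x\}$ extends across the puncture by one-point compactification, so $\Homeo_0(P\setminus D^3)\cong\Homeo_0(P,\mathrm{rel}\,x)$. The compact side $\Diff_0(P\setminus\mathrm{int}(D^3))$ is identified with $\Diff_0(P,\mathrm{rel}\,x)$ using Smale's $\Diff(S^2)\simeq\SO(3)$ together with the framed-embedding fibration, and then Cerf--Hatcher is applied once more.
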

\begin{proof}
Consider the zig-zag of maps
\[
\Diff_0(P\backslash D^3)\xrightarrow{\simeq} \mathrm{Homeo}_0(P\backslash D^3)\xleftarrow{} \mathrm{Homeo}_0(P\backslash\text{int}(D^3))\xleftarrow{\simeq}\Diff_0(P\backslash\text{int}(D^3)).
\]
Let $x$ be the center of the embedded ball $D^3$ in $P$. The group $ \mathrm{Homeo}_0(P\backslash D^3)$ has the same homotopy type as $ \mathrm{Homeo}_0(P\backslash x)\cong  \mathrm{Homeo}_0(P, \text{rel }x)$. 

On the other hand, from the case $2$ in the proof of \Cref{prop:20} and Cerf's theorem, we know 
\[
\mathrm{Homeo}_0(P\backslash\text{int}(D^3))\xleftarrow{\simeq}\Diff_0(P\backslash\text{int}(D^3))\xleftarrow{\simeq}\Diff_0(P\backslash\text{int}(D^3), \partial_{\text{SO}(3)}),
\]
\[
\Diff_0(P\backslash\text{int}(D^3), \partial_{\text{SO}(3)})\xrightarrow{\simeq}\Diff_0(P,\text{rel }x)\xrightarrow{\simeq} \mathrm{Homeo}_0(P, \text{rel }x).
\]
Therefore, there is a zig-zag of weak homotopy equivalences between $\Diff_0(P\backslash D^3)$ and  $\Diff_0(P\backslash\text{int}(D^3))$, as desired. 
\end{proof}

As observed in the proof of \Cref{lem:2}, we have $\Diff_0(M)\simeq \Diff_0(N,\text{rel }x)$ where $N$ is a closed $3$-manifold obtained from $M$ by capping of the boundary sphere with a ball whose center is $x$.
Hence, to show that the action of $\Diff_0(\partial M)$ does not extend to $\mathrm{Homeo}_0(M)$, it is enough to show that the map 
\[
H^*(\mathrm{BSO}(3);\bQ)\to H^*(\BDiff_0(N, \text{rel }x);\bQ),
\]
that is induced by taking derivative at $x$, has a non-trivial kernel.  

\paragraph{Sample application} 
As a toy case (given known deep results), and to give an example of this approach, we give an alternative proof of Theorem \ref{thm:sphere} when $M$ is obtained from a hyperbolic $3$-manifold or a Haken manifold by removing a ball.
\begin{prop} \label{prop:2}
Let $N$ be a closed, irreducible, hyperbolic or Haken $3$-manifold. Let $x\in N$ be a marked point.  Then the image of the first Pontryagin class $p_1$ under the map $$H^4(\mathrm{BSO}(3);\bQ)\to H^4(\BDiff_0(N, \text{rel }x);\bQ),$$ induced by taking derivative at $x$, is zero. 
\end{prop}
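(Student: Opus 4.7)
The plan is to translate the vanishing of $p_1$ into a cohomological statement about $N$ itself, using known results on the homotopy type of $\Diff_0(N)$. The starting point is the principal $\Diff_0(N,\mathrm{rel}\,x)$-bundle $\Diff_0(N,\mathrm{rel}\,x)\to\Diff_0(N)\to N$ coming from the transitive evaluation action of $\Diff_0(N)$ on $N$; delooping yields a Serre fibration
\[
N\to\BDiff_0(N,\mathrm{rel}\,x)\to\BDiff_0(N).
\]
Under the resulting identification $\BDiff_0(N,\mathrm{rel}\,x)\simeq N\hcoker\Diff_0(N)$, the classifying map to $\mathrm{BSO}(3)$ induced by the derivative at $x$ classifies the rank-$3$ vertical tangent bundle $T^v\pi$ of this fibration, whose restriction to each fiber $N$ is $TN$. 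Thus the class I need to vanish is $p_1(T^v\pi)$.

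In the hyperbolic case, Gabai's solution of the Smale conjecture for closed hyperbolic $3$-manifolds gives $\Diff_0(N)\simeq *$, and in the closed irreducible Haken case that is not Seifert fibered the Hatcher--Ivanov theorem gives the same conclusion. In either situation $\BDiff_0(N)$ is contractible, the fibration degenerates to a weak equivalence $\BDiff_0(N,\mathrm{rel}\,x)\simeq N$, and the pullback of $p_1$ is literally $p_1(TN)\in H^4(N;\bQ)=0$ --- either for the trivial dimension reason or because every closed orientable $3$-manifold is parallelizable by Stiefel.

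The remaining case to address is that of a Seifert fibered irreducible Haken manifold, where $\Diff_0(N)$ is homotopy equivalent to the identity component of the isometry group, typically a torus $T^k$, so that $\BDiff_0(N)\simeq(\bCP^\infty)^k$ has only even-degree rational cohomology. Here I would run the rational Serre spectral sequence of the fibration above. Since $p_1(T^v\pi)$ restricts fiberwise to $p_1(TN)=0$, it lies in positive filtration, and the task becomes ruling out contributions from $E_\infty^{2,2}$ and $E_\infty^{4,0}$. The natural mechanism is to upgrade parallelizability to a $\Diff_0(N)$-equivariant (up to isotopy) parallelization of $TN$ coming from the $S^1$-action of the Seifert structure, which would globally trivialize $T^v\pi$ over the Borel construction. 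This equivariant trivialization, rather than the bare parallelizability of a single $3$-manifold, is the main obstacle I foresee; once available, it collapses both potential contributions on the $E_2$-page and yields $p_1(T^v\pi)=0$.
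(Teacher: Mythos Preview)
Your setup via the fibration $N\to\BDiff_0(N,\mathrm{rel}\,x)\to\BDiff_0(N)$ and your treatment of the case $\Diff_0(N)\simeq *$ match the paper exactly. The divergence is in the Seifert fibered Haken case, where you propose a Serre spectral sequence argument together with a $\Diff_0(N)$-equivariant parallelization of $TN$ that you yourself flag as the ``main obstacle.'' This is a genuine gap: you have not established that equivariant trivialization, and in fact it is not needed.

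The paper avoids this difficulty entirely by proving the much stronger statement that $H^4(\BDiff_0(N,\mathrm{rel}\,x);\bQ)=0$ in every case, so the image of $p_1$ vanishes for the trivial reason that the target group is zero. The point you are missing is that Hatcher's classification is sharper than ``typically a torus $T^k$'': for closed irreducible Haken $N$ one has $\Diff_0(N)\simeq *$, or $\Diff_0(N)\simeq S^1$, or $N\cong T^3$ and $\Diff_0(N)\simeq T^3$. In the $S^1$ case the circle acts on $N$ (essentially freely) through the Seifert fibering, so the Borel construction $N\hcoker S^1$ has the rational homotopy type of the $2$-dimensional orbit space $N/S^1$, whence $H^4=0$. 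In the $T^3$ case the evaluation map $\Diff_0(T^3)\to T^3$ is a homotopy equivalence, so $\BDiff_0(T^3,\mathrm{rel}\,x)$ is contractible. Either way the entire degree-$4$ cohomology of the target vanishes, and no analysis of $p_1(T^v\pi)$, filtration, or equivariant framings is required.
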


\begin{cor} 
For $M$ that is obtained by removing a ball from $N$ as above, there is no extension $\Diff_0(\partial M) \to \mathrm{Homeo}_0(M)$.  
\end{cor}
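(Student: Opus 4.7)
The plan is to argue by contradiction, combining the cohomological vanishing of Proposition 5.2 with the automatic continuity results discussed at the start of Section 5. Suppose an extension $\phi\colon \Diff_0(\partial M) \to \Homeo_0(M)$ exists. I first want to promote $\phi$ to a \emph{continuous} homomorphism; once this is done, the contradiction follows from a formal comparison of classifying-space maps, where Proposition 5.2 provides a nontrivial element of $\ker(r^*)$ that ought not to exist.

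The continuity step is the most delicate part, and is where I expect the main work to lie. The source $\Diff_0(S^2)$ has the homotopy type of the compact Lie group $\mathrm{SO}(3)$ by Smale's theorem, and the target $\Homeo_0(M)$ is a separable Polish group. Two routes are available, both of which are foreshadowed by the automatic continuity discussion in Section~5: either double $M$ along the boundary and use $\phi$ together with the mirror action to produce a homomorphism into $\Homeo_0(\widetilde{N})$, then apply a Parkhe-type smoothing at the seam to conjugate this into $\Diff_0(\widetilde{N})$ (continuous by Hurtado), or appeal directly to Mann's automatic continuity theorem after exploiting the weak equivalence $\Diff_0(S^2) \simeq \Homeo_0(S^2)$. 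Either way, $\phi$ is continuous and so induces a map on classifying spaces.

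Granted continuity of $\phi$, the restriction map $r\colon \Homeo_0(M) \to \Homeo_0(\partial M)$ satisfies $r \circ \phi = \iota$, the natural inclusion $\Diff_0(\partial M) \hookrightarrow \Homeo_0(\partial M)$. By Smale's theorem and Hatcher's proof of the Smale conjecture, $\iota$ is a weak equivalence. Consequently the composition
\[
B\Diff_0(\partial M) \xrightarrow{B\phi} B\Homeo_0(M) \xrightarrow{Br} B\Homeo_0(\partial M)
\]
is a weak equivalence, and so on rational cohomology $(Br)^*\colon H^*(B\Homeo_0(\partial M);\bQ) \to H^*(B\Homeo_0(M);\bQ)$ is injective.

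To extract the contradiction, trace $p_1 \in H^4(B\Homeo_0(\partial M);\bQ) \cong H^4(B\mathrm{SO}(3);\bQ)$ under $(Br)^*$. Using Cerf--Hatcher to replace $\Homeo_0(M)$ by $\Diff_0(M)$ and then Lemma~5.1 to identify $\Diff_0(M) \simeq \Diff_0(N,\text{rel } x)$, the map $(Br)^*$ becomes precisely the derivative-at-$x$ map $H^4(B\mathrm{SO}(3);\bQ) \to H^4(B\Diff_0(N,\text{rel }x);\bQ)$. Proposition~5.2 states that $p_1$ lies in the kernel of this map, contradicting injectivity. Hence no such extension $\phi$ can exist. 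The cohomological content is entirely supplied by Proposition~5.2; the only nontrivial input beyond it is the automatic continuity argument and the bookkeeping of weak equivalences.
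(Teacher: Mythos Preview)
Your proposal is correct and follows exactly the paper's intended argument: the corollary is not given a separate proof in the paper, but is meant to be an immediate consequence of Proposition~5.2 together with the automatic continuity discussion at the start of Section~5 and the identification (via Lemma~5.1 and Cerf--Hatcher) of the restriction map with the derivative-at-$x$ map.

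One small caveat on the continuity step. Your Route~2 does not work as written: the weak equivalence $\Diff_0(S^2)\simeq\Homeo_0(S^2)$ is a statement about homotopy types, not about abstract group structure, so it cannot be used to transfer Mann's automatic continuity theorem (whose hypothesis is that the \emph{source} is literally a $\Homeo_0$ group). Route~1 also needs care, since Parkhe's smoothing at the seam presupposes that the action on each half of the double is already by diffeomorphisms, which is not granted when the target is $\Homeo_0(M)$. The paper is equally brief on this point, so this is not a divergence from the paper's argument; but a fully rigorous treatment of the $\Diff_0(S^2)\to\Homeo_0(M)$ case would need either an automatic continuity statement for $\Diff_0$ with separable target, or a reduction to the $\Diff_0(M)$ target (where Route~1 applies cleanly).
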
 

\begin{rem}
The corollary follows easily in the case where $N$ is hyperbolic since there is a bound on the order of a finite order diffeomorphism of a hyperbolic $N$-manifold, hence there are finite subgroups of $\mathrm{SO}(3)$ that will not extend.  
In detail, if $f$ is a finite order element of $\mathrm{SO}(3)$ that extends to a diffeomorphism of $M$, we may extend this to a finite order diffeomorphism of $N$ acting as a rotation on the ball.  Thus, its fixed set is 1-dimensional, in which case work of Thurston shows that it is conjugate to an isometry of $N$ with a hyperbolic metric.  Mostow rigidity now gives a bound on the order of $f$. 

The new content in this case of Proposition \ref{prop:2} is the cohomological obstruction to extension.
\end{rem}

\begin{proof}[Proof of \Cref{prop:2}]If $N$ is hyperbolic, by Gabai's theorem (\cite{gabai2001smale}), we have $\Diff_0(N)\simeq *$  and if $N$ is Haken, by Hatcher's theorem (\cite{hatcher1999spaces}), if $N$ is  diffeomorphic to a $3$-torus then the natural inclusion $N\hookrightarrow \Diff_0(N)$ is a homotopy equivalence, and otherwise we have $\Diff_0(N)\simeq *\text{ or } S^1$.
 
 \noindent {\bf Case 1}: Suppose $\Diff_0(N)\simeq *$. We have a fibration 
 \begin{equation}\label{fib}
 N\to \BDiff_0(N, \text{ rel }x)\to \BDiff_0(N).
 \end{equation}
Therefore, $\BDiff_0(N, \text{ rel }x)$ has the same homotopy type as $N$. Hence, we have $H^4(\BDiff_0(N, \text{ rel }x);\bQ)=0$, in particular, the image of $p_1$ under the derivative map vanishes.

 \noindent {\bf Case 2}: Suppose $\Diff_0(N)\simeq S^1$. Hence, the fibration \ref{fib}, is the same as the following fibration up to homotopy
 \[
 N\to N\hcoker S^1\to \mathrm{B}S^1.
 \]
Because $\Diff_0(N)\simeq S^1$, the manifold $N$ is a Seifert fibered manifold with a free $S^1$ action. Therefore, the homotopy quotient $N\hcoker S^1$ is homotopy equivalent with the quotient $N/S^1$ which is a $2$-dimensional CW-complex. Hence, again we have 
\[
H^4( \BDiff_0(N, \text{ rel }x);\bQ)=H^4(N\hcoker S^1;\bQ)=0.
\]
So the image of $p_1$ under the derivative map vanishes.

 \noindent {\bf Case 3}: Suppose $N$ is diffeomorphic to a $3$-torus. Since $\Diff_0(N)$ is homotopy equivalent to $N$, the fibration \ref{fib} implies that $\BDiff_0(N, \text{ rel }x)$ is contractible. Hence, the image of $p_1$ under the derivative map vanishes.
\end{proof}

\begin{rem} 
We remark that is not hard to show that, when $M$ is a lens space with a $3$-ball removed, the first Pontryagin class does not vanish.
It appears to be an interesting problem to answer problem \ref{quest:p_1} for {\em reducible} manifolds.  
\end{rem} 

\bibliographystyle{alpha}
\bibliography{reference}
\end{document}